\documentclass[preprint,12pt]{elsarticle}

\usepackage{amssymb}
\usepackage{amsmath}

\usepackage[T2A]{fontenc} 

\newtheorem{theorem}{Theorem}
\newtheorem{proposition}{Proposition}
\newtheorem{corollary}{Corollary}

\journal{Indagationes Mathematicae}

\begin{document}

\begin{frontmatter}



\title{Power-weighted Hardy type integral inequalities with additional terms on finite intervals} 


\author{Ramil Nasibullin} 
\ead{NasibullinRamil@gmail.com}

\affiliation{organization={N.I. Lobachevsky Institute of Mathematics and Mechanics, Kazan Federal University},
            addressline={35, Kremlievskaia Str.},
            city={Kazan},
            postcode={420008},
            country={Russia}}
\affiliation{organization={Institute of Software Development and Engineering, Innopolis University},
            addressline={1, Universitetskaya Str.},
            city={Innopolis},
            postcode={420500},
            country={Russia}}

\begin{abstract}
In this paper we deal with integral Hardy type inequalities on finite segments. The interval is assumed to be finite and avoiding the origin. We prove new sharp $L_2$-inequalities and their $L_p$-analogues. Constants in the proven inequalities depend on the first root of the corresponding Lamb type equation for the Bessel function. In the $L_2$-case the extremal function is found. We consider Hardy-type inequalities in differential form. Using the one-dimensional Hardy inequalities, we establish an optimal multi-dimensional version of the power-weighted Hardy inequality in differential form on annuli.
\end{abstract}



\begin{keyword}
Hardy's inequality\sep additional term \sep finite interval \sep Bessel function \sep Lamb equation.



\MSC 26D10 \sep 26D15 \sep 33C10 

\end{keyword}

\end{frontmatter}

\section{Introduction}

Applications of Hardy type inequalities in mathematical physics were a cause to their wide development. There are many papers and books devoted to different generalizations and modifications of Hardy's inequalities (see, for example, \cite{BEL}--\cite{Gadjev_arxiv} and references therein).   The original Hardy theorem from \cite{HLP} states that if $p\geq 1$ and $s>1$, then for a given function $f:(0,\infty)\to[0,\infty)$ obeying the condition $f/t^{s/p-1}\in L^p(0,\infty)$ the following inequality holds 
$$
\int\limits_0^{+\infty}  \left(\int\limits_0^tf(\tau)d\tau\right)^p \frac{dt}{t^{s}} \leq \left(\frac{p}{s-1}\right)^p\int\limits_0^{+\infty}\frac{
f^{p}(t)}{t^{s-p}}dt.
$$
The constant $\left(\frac{p}{s-1}\right)^p$ in this inequality is the best possible, that is,
it is maximal among all possible ones (see \cite{Avkh_23_Ufa} for more information).

As we said, Hardy inequalities have been generalized  and modified in numerous  ways, and the literature devoted to such inequalities is wide and different. For example,  Hardy type inequalities on finite intervals are known. The interval is assumed to be finite and avoiding the origin. Namely,  in \cite{GePa, DiGaIs, Wid58, Wid61}, the behaviour of the best possible constant in the general $L_p$-Hardy inequality 
\begin{equation}\label{DGI_Ineq}
    \int\limits_a^b\left(\int\limits_a^tf(\tau)d\tau\right)^p\frac{dt}{t^p}dt\leq d_p(a;b)\int\limits_a^b f^p(t)dt,
\end{equation}
 where  $a$ and $b$ are any fixed numbers with $0 < a < b < \infty$, $p > 1$, $f(t)\geq 0$ and $f^p$ is integrable over $[a,b]$, have been studied.  

 In \cite{DiGaIs}, D.K.~Dimitrov, I.~Gadjev and M.E.H.~Ismail proved  the sharp  $L_2$-inequality
\begin{equation}\label{DGI_Ineq}
    \int\limits_a^b \left(\int\limits_a^tf(\tau)d\tau\right)^2 \frac{dt}{t^2}\leq \frac{4}{1+4\lambda^2}\int\limits_a^b f^2(t)dt,
\end{equation}
where $\lambda$ is the only solution of the equation
$$
\tan \left(\lambda\log\frac{b}{a}\right)+2\lambda = 0, \quad\text{in the interval}\quad \lambda\in\left(\frac{\pi}{2\log\frac{b}{a}}, \frac{\pi}{\log\frac{b}{a}}\right).
$$
Moreover,  the authors found the extremal function 
$$
y_{a,b}(t) =t^{-1/2}\left(2\lambda \cos\left(\log\frac{t}{a}\right)+\sin\left(\lambda \log\frac{t}{a}\right)\right)
$$
at which in (\ref{DGI_Ineq}) the equality is attained. 

In \cite{GePa}, inequality (\ref{DGI_Ineq}) was extended and generalized by F. Gesztesy and M.M.H.~Pang. They established Hardy inequalities in integral and differential forms.   Using the one-dimensional Hardy-type results in differential form, they derive an optimal multi-dimensional version of the power-weighted Hardy inequality on annuli. One of the  optimal power-weighted Hardy type inequalities reads as follows:
\begin{equation}\label{GePa_Ineq}
    \int\limits_a^b \left(\int\limits_a^tf(\tau)d\tau\right)^2  \frac{dt}{t^{2-s}}\leq (4^{-1}(1-s)^2+\lambda^2)\int\limits_a^b f^2(t)t^s dt,
\end{equation}
where $s\in\mathbb{R}\setminus\{1\}$ and $\lambda$ is the unique solution  of
$$
\tan \left(\lambda \log\frac{b}{a}\right)+\frac{2\lambda}{1-s} = 0, \quad\text{in the interval}\quad \lambda\in\left(0, \frac{\pi}{\log\frac{b}{a}}\right)\setminus \frac{\pi}{2\log\frac{b}{a}}.
$$
In addition, F. Gesztesy and M.M.H.~Pang found the extremal function at which in the inequality equality is attained. It is easy to see that if $s = 0$, then inequality (\ref{GePa_Ineq}) implies inequality (\ref{DGI_Ineq}).

Also in \cite{GePa} the following sharp inequality in differential form
\begin{equation}\label{in_Ge_Pa}
    \left(\frac{(1-\alpha)^2}{4}+\frac{\pi^2}{\log^2\frac{b}{a}}\right)\int\limits_a^b \frac{|y(t)|^2}{t^{2-\alpha}}dt\leq \int\limits_a^b |y'(t)|^2t^\alpha dt
\end{equation}
are proved for any $y\in W_0^{1,2}((a,b))$. Here by $W_0^{1,2}((a,b))$ the completion of $C^\infty_0((a,b))$ with respect to the corresponding Sobolev norm is denoted. This means that $y$ satisfies two boundary conditions $y(a) = y(b) = 0$. Notice that in  (\ref{DGI_Ineq}) and (\ref{GePa_Ineq}) there is only one zero boundary condition.

We generalize and modify the above inequality. Namely, we prove that if  $q>0$, $s>0$, $\nu\in[0,\frac{1}{q}]$, $a$ and $b$ is any fixed numbers with $0 < a < b < \infty$, $y:[a,b]\to \mathbb{R}$ is  an absolutely continuous function such that $y(a) =y(b)= 0$ and $y'^2/t^{s-1}\in L^1[a,b]$,  then
\begin{multline}\label{th3_ineq_intr}
    \frac{s^2}{4}\int\limits_a^b\frac{y^2(t)}{t^{s+1}}dt+\frac{q^2j_\nu^2}{4\log^2\frac{b}{a}} \int\limits_a^b\frac{y^2(t)}{t^{s+1} \log^{2-q}\frac{t}{a}}dt+\\ \frac{1-\nu^2q^2}{4}\int\limits_a^b\frac{y^2(t)}{t^{s+1}\log^2\frac{t}{a}}dt\leq\int\limits_a^b \frac{y'^2(t)}{t^{s-1}}dt,
\end{multline}
where  $j_\nu$  is the first zero of the Bessel function  
$$
J_\nu(t) = \sum_{k=0}^\infty\frac{(-1)^kt^{2k+\nu}}{2^{2k+\nu}k!\Gamma(k+1+\nu)}, \quad t\in[0,1],
$$
of order $\nu\geq 0$ (see \cite{Watson} for more information). Furthermore, if $\nu\in(0,\frac{1}{q}]$ then the equality in this inequality is attained for 
$$
Y_0(t) = t^{\frac{s}{2}}\sqrt{\log\frac{t}{a}} J_\nu \left(j_\nu\left(\frac{\log \frac{t}{a}}{\log\frac{b}{a}}\right)^{q/2} \right).
$$

As a corollary of our result in the case $\nu  = 1/2$ and $q=2$, using known facts
$$
J_{1/2}(x) = \sqrt{\frac{2}{\pi}}\frac{\sin x}{\sqrt{x}} \quad \text{and}\quad j_{1/2} = \pi,
$$
we get inequality (\ref{in_Ge_Pa}). In addition, we obtain a multidimensional analogue of (\ref{th3_ineq_intr}) on annuli. 

It is interesting to compare the inequalities (\ref{in_Ge_Pa}) and (\ref{th3_ineq_intr}) with the result of the paper \cite{Avk_18} by F.G. Avkhadiev. In \cite{Avk_18}, the author proved that the inequality 
$$
\int\limits_q^1 \frac{y^2(t)}{\rho^2(t)}tdt+\frac{\pi^2}{4\log^2 q}\int\limits_q^1 \frac{y^2(t)}{t}dt< \int\limits_q^1 y'^2(t)tdt,
$$
where
$$
\rho(t) = \frac{2t\log q}{\pi} \sin\frac{\pi\log t}{\log q},
$$
is valid for an absolutely continuous function $y$ with $y(q) = y(1) = 0$. We see that the inequality are proved for functions with two zero boundary conditions. Moreover, using this inequality, F.G.~Avkhadiev obtained inequalities on annuli and in doubly-connected domains of finite modulus.  

Also, $L_p$-Hardy inequalities on finite intervals are known. I.~Gadjev \cite{Gadjev_26} (see also \cite{Gadjev_arxiv}) studies the behaviour of the smallest possible constants $d_{p}(a, b)$ and gets the exact rate of convergence of $d_{p}(a, b)$.  Furthermore, the “almost extremal” function is found. He obtained two side bilateral estimates of $d_{p}(a, b)$. In fact, I.~Gadjev proved that if $p\geq 2$ and $0 < a< b < +\infty$, then there exist positive constants $c_1 = c_1(p)$ and $c_2 = c_2(p)$, depending only on $p$, such that 
\begin{equation}\label{billat_ineq}
\left(\frac{p}{p-1}\right)^p\left(1-\frac{c_1}{\log^2\frac{b}{a}}\right)\leq d(a,b)\leq \left(\frac{p}{p-1}\right)^p\left(1+\frac{c_2}{\log^2\frac{b}{a}}\right)^{-1}.
\end{equation}

In the present paper, as a corollary of our main results, we get the following $L_p$-inequality 
$$
\int\limits_a^b \frac{|y(t)|^p}{t^{p}}dt \leq \left(\frac{p}{p-1}\right)^p\left(1+\frac{2p\lambda^2}{(p-1)^2}\right)^{-1}\int\limits_a^b\frac{|y'(t)|^{p}}{t^{p-2}}dt,
$$
where $\lambda$ is the first root of the corresponding Lamb type equation. If $p=2$, then we have inequality (\ref{DGI_Ineq}).

D.K.~Dimitrov, I.~Gadjev and M.E.H.~Ismail \cite{DiGaIs} in an $L_2$-case have proven truly beautiful and almost complete  inequality, in which an exact constant and an extremal function are found.  This result is the direct generalization of another beautiful sharp result  by V.I.~Levin~\cite{Levin}
\begin{equation}\label{Levin_Ineq}
    \int\limits_0^1\frac{y^2(t)}{t^2}dt < 4\int\limits_0^1 y'^2(t)dt,
\end{equation}
where $y:[0,1]\to \mathbb{R}$ is a continuously differentiable function such that $y(0) = 0$, $y\not\equiv 0$ and $y'\in L^2[0,1]$.  The constant $4$ is sharp although there is no function for which in the inequality equality is actually attained.

Inequality (\ref{Levin_Ineq}) is a differential form of (\ref{DGI_Ineq}). In \cite{AW_Zamm}, F.G.~Avkhadiev and \mbox{K.-J.~Wirths}  established analogues of (\ref{Levin_Ineq})  with an additional positive term. Namely, they proved that if $q>0$ and $\nu \in \left[0,\frac{1}{q}\right]$, $y\in C^1([0,1])$ such that  $y(0)=0$, then the sharp inequality holds
\begin{equation}\label{avkh_wir1}
(1-\nu^2q^2)\int\limits_0^1\frac{y^2(t)}{t^2} d t+q^2\lambda^2\int\limits_0^1\frac{y^2(t)}{t^{2-q}} d t \leq  4\int\limits_0^1  y'^2(t) dt.
\end{equation}
Here $\lambda$ is a constant defined as  the first positive root of the following Lamb equation 
$$
J_\nu(\lambda)+q\lambda J_\nu'(\lambda)=0, \quad \lambda\in (0,j_\nu).
$$

The goal of the present paper is to generalize inequality (\ref{DGI_Ineq}) by additional positive terms. For example, we get the following theorem.
\begin{theorem}\label{th1}
    Let $\nu\in[0,\frac{1}{2}]$, $a$ and $b$ be fixed numbers with $0 < a < b < \infty$. Suppose $y:[a,b]\to \mathbb{R}$ is an absolutely continuous function  such that $y(a) = 0$ and $y'\in L^2[a,b]$. Then
    $$
\frac{1+4\lambda^2}{4}\int\limits_a^b\frac{y^2(t)}{t^2}dt+ \frac{1-4\nu^2}{4}
\int\limits_a^b\frac{y^2(t)}{t^2\log^2\frac{t}{a}}dt\leq\int\limits_a^b y'^2(t)dt,
    $$
where $\lambda$ is the first root of the following Lamb type equation
$$
\frac{\log\frac{eb}{a}}{2\log\frac{b}{a}}J_\nu \left(\lambda\log \frac{b}{a} \right)+\lambda J'_\nu \left(\lambda\log \frac{b}{a} \right)=0.
$$
Moreover, if $\nu\in (0,1/4]$ then the equality in this inequality is attained for 
$$
y_0(t) = \sqrt{t \log\frac{t}{a}} J_\nu \left(\lambda\log \frac{t}{a} \right), \quad \lambda\in \left(0,\frac{j_\nu}{\log \frac{b}{a}}\right).
$$
\end{theorem}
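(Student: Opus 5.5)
The plan is to reduce the inequality to a one-dimensional Robin-type eigenvalue problem on $(0,L)$, where $L=\log\frac{b}{a}$, and then apply a ground-state (Picone-type) identity built from the candidate extremal.

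\textbf{Change of variables.} Put $x=\log\frac{t}{a}\in(0,L)$ and $y(t)=\sqrt{t}\,u(x)$, so $u(0)=0$. Then $y'(t)=t^{-1/2}\bigl(u'(x)+\tfrac12 u(x)\bigr)$ and $dt=t\,dx$. Using $\int_0^L uu'\,dx=\tfrac12u^2(L)$, we get
$$
\int\limits_a^b y'^2dt=\int\limits_0^L u'^2dx+\frac{u^2(L)}{2}+\frac14\int\limits_0^L u^2dx .
$$
We also have $\int_a^b y^2t^{-2}dt=\int_0^L u^2dx$ and $\int_a^b y^2t^{-2}\log^{-2}\frac{t}{a}\,dt=\int_0^L u^2x^{-2}dx$. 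Hence the claim is equivalent to
$$
\lambda^2\int\limits_0^L u^2dx+\Bigl(\frac14-\nu^2\Bigr)\int\limits_0^L\frac{u^2}{x^2}dx\le \int\limits_0^L u'^2dx+\frac{u^2(L)}{2}.
$$

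\textbf{The comparison function.} Let $u_0(x)=\sqrt{x}\,J_\nu(\lambda x)$. By Bessel's equation,
$$
-u_0''-\Bigl(\tfrac14-\nu^2\Bigr)x^{-2}u_0=\lambda^2u_0 .
$$
Since $u_0'=\frac{1}{2\sqrt x}J_\nu+\sqrt x\,\lambda J_\nu'$, the Robin condition $u_0'(L)=-\tfrac12u_0(L)$ reads $\frac{1+L}{2L}J_\nu(\lambda L)+\lambda J_\nu'(\lambda L)=0$. Because $1+L=\log\frac{eb}{a}$, this is exactly the Lamb type equation in the statement.

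We need a root with $\lambda L\in(0,j_\nu)$. Set $g(\mu)=\frac{1+L}{2}J_\nu(\mu)+\mu J_\nu'(\mu)$.
\begin{itemize}
\item Near $\mu=0$ we have $g(\mu)\sim\bigl(\frac{1+L}{2}+\nu\bigr)J_\nu(\mu)>0$.
\item At the first zero, $g(j_\nu)=j_\nu J_\nu'(j_\nu)<0$.
\end{itemize}
So the first root $\lambda L$ lies in $(0,j_\nu)$. Consequently $u_0>0$ on $(0,L]$, and $w:=u_0'/u_0$ is well defined there, with $w(x)\sim(\nu+\tfrac12)/x$ as $x\to0$ and $w(L)=-\tfrac12$.

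\textbf{Ground-state identity.} For $u=u_0v$ on $[\varepsilon,L]$, integrate $(u'-wu)^2\ge0$, using $w'+w^2=u_0''/u_0$. This gives
$$
\int\limits_\varepsilon^L u'^2dx-\int\limits_\varepsilon^L\Bigl(\lambda^2+\frac{1/4-\nu^2}{x^2}\Bigr)u^2dx=\int\limits_\varepsilon^L (u'-wu)^2dx+w(L)u^2(L)-w(\varepsilon)u^2(\varepsilon).
$$
Since $w(L)=-\tfrac12$, the boundary term at $L$ is exactly $-\tfrac12u^2(L)$, which is what we need.

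\textbf{Main obstacle: the endpoint $x=0$.} We must show $w(\varepsilon)u^2(\varepsilon)\to0$. By Cauchy--Schwarz,
$$
|y(t)|\le\Bigl(\int_a^t y'^2\Bigr)^{1/2}\sqrt{t-a},
$$
so $u^2(\varepsilon)=o(\varepsilon)$ while $w(\varepsilon)=O(1/\varepsilon)$, and the term vanishes. All integrals are then passed to the limit by monotone convergence; for $\nu\le\frac12$ the coefficient $\frac14-\nu^2$ is nonnegative. Dropping $\int(u'-wu)^2\ge0$ gives the inequality.

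\textbf{Equality case.} Take $y=y_0$, i.e.\ $u=u_0$, so $v\equiv1$ and $u'-wu\equiv0$. We must check that $y_0$ is admissible and that all integrals are finite. Near $0$ we have $u_0\sim cx^{\nu+1/2}$ and $u_0'\sim cx^{\nu-1/2}$. Hence $u_0'\in L^2$ and $u_0^2/x^2\in L^1$ for every $\nu>0$, which covers the stated range $(0,1/4]$. The boundary term $w(\varepsilon)u_0^2(\varepsilon)\sim x^{2\nu}\to0$, so equality holds for $y_0$.
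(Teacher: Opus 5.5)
Your proof is correct and is essentially the paper's argument. Under $x=\log\frac{t}{a}$, $y=\sqrt t\,u$ one has $y'-\frac{y_0'}{y_0}y=t^{-1/2}(u'-wu)$, so your ground-state identity is exactly the paper's expansion of $\int_a^b\bigl(y'-\frac{y_0'}{y_0}y\bigr)^2dt$, and your Robin condition $w(L)=-\tfrac12$ is the Neumann condition $y_0'(b)=0$ encoded by the Lamb equation. The paper uses that condition only implicitly; you make it explicit, and you also check that the first root lies in $(0,j_\nu)$.
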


Note that since the equality in (\ref{GePa_Ineq}) is  attained, it is unusual, and even it looks strange, to set a question on adding positive terms (see \cite{
Nas_Rew, BM, HoHoL, Tid, AW_Zamm, AW_Lamb, Nas_IzvM, Nas_Sib, FMT}). Here, we obtain new sharp $L_2$-inequalities and $L_p$-inequalities. Moreover, in the $L_2$-case, the extremal function is found. We consider inequalities in differential form. Our results imply the above mentioned inequalities. We should add that we use standard methods for special functions.  The main achievement is the construction of these functions and the idea  of adding additional terms.

\section{$L_2$-inequalities for functions with a zero boundary condition}

In this section, we will consider Hardy type inequalities for functions with one zero boundary condition. Besides, we provide the proof of Theorem \ref{th1}. In the sequel, we need some properties of the function $y_0(t)$. Using the well known equality for the Bessel functions
$$
J'_\nu(z) = \frac{1}{2}\left(J_{\nu-1}(z)-J_\nu (z)\right),
$$
direct computations give 
$$
y_0'(t) = \frac{1}{2\sqrt t}\left(\frac{\log\frac{et}{a}}{\sqrt{\log\frac{t}{a}}}J_\nu \left(\lambda\log \frac{t}{a} \right)+ 2\lambda \sqrt{\log\frac{t}{a}}  J'_\nu \left(\lambda\log \frac{t}{a} \right)\right),
$$
$$
\frac{y'_0(t)}{y_0(t) } = \frac{1}{t}\left(\frac{\log\frac{et}{a}}{2\log\frac{t}{a}}+\lambda\frac{J'_\nu \left(\lambda\log \frac{t}{a} \right)}{J_\nu \left(\lambda\log \frac{t}{a} \right)}\right)= 
$$
$$
\frac{1}{t}\left(\frac{\log\frac{et}{a}}{2\log\frac{t}{a}}+\frac{\lambda}{2}\left(\frac{J_{\nu-1} \left(\lambda\log \frac{t}{a} \right)}{J_\nu \left(\lambda\log \frac{t}{a} \right)}-\frac{J_{\nu+1} \left(\lambda\log \frac{t}{a} \right)}{J_\nu \left(\lambda\log \frac{t}{a} \right)}\right)\right)
$$
and
\begin{equation}\label{f_difeq_th1}
\frac{y''_0(t)}{y_0(t)} = -\frac{1+4\lambda^2}{4t^2}-\frac{1-4\nu^2}{4t^2\log^2\frac{t}{a}}.
\end{equation}
Applying integrating by parts, we have
$$
0\leq \int\limits_a^b\left(y'(t)-\frac{y_0'(t)}{y_0(t)}y(t)\right)^2dt=
$$
$$
= \int\limits_a^by'^2(t)dt-\int\limits_a^b  \frac{y_0'(t)}{y_0(t)} d y^2(t)+\int\limits_a^b  \left(\frac{y_n'(t)}{y_n(t)} \right)^2 y^2(t)dt=
$$
$$
\int\limits_a^by'^2(t)dt+\int\limits_a^b\left(\frac{y_0'^2(t)}{y_0^2(t)}+\left(\frac{y_0'(t)}{y_0(t)}\right)^{'}\right)y^2(t)dt
+\lim_{t\to a+}\frac{y'_0(t)}{y_0(t)}y^2(t)-\lim_{t\to b-}\frac{y'_0(t)}{y_0(t)}y^2(t).
$$
Consequently, 
$$
\int\limits_a^by'^2(t)dt+\int\limits_a^b\frac{y_0''(t)}{y_0(t)}y^2(t)dt\geq
\lim_{t\to b-}\frac{y'_0(t)}{y_0(t)}y^2(t)-\lim_{t\to a+}\frac{y'_0(t)}{y_0(t)}y^2(t).
$$
As a consequence of the condition $y'\in L^2[a,b]$ via the Cauchy-Schwartz inequality 
$$
y^2(t)\leq \left(\int\limits_a^t|y'(\tau)|d\tau\right)^2\leq (t-a)\int\limits_a^t|y'(\tau)|^2d\tau,
$$
we obtain $y^2(t)/(t-a)\to 0$ as $t\to a+$. 

Taking into account  the following asymptotic forms
$$
J_\nu(z) \sim \frac{1}{\Gamma(\nu+1)}\left(\frac{z}{2}\right)^\nu
$$
for  small arguments $z$, we obtain
$$
y'_0(t)\sim \frac{1}{\left(\log\frac{t}{a}\right)^{\frac{1}{2}-\nu}}, \quad t\to a, 
$$
$$
\frac{y'_0(t)}{y_0(t) } \sim \frac{1}{t}\left(\frac{\log\frac{et}{a}}{2\log\frac{t}{a}}+\frac{\nu}{\log\frac{t}{a}}\right), \quad t\to a .
$$
Hence
$$
\lim_{t\to a}(t-a)\frac{y'_0(t)}{y_0(t) } = \frac{1}{2}+\nu\quad \text{and} \quad \lim_{t\to a+}\frac{y'_0(t)}{y_0(t)}y^2(t)=0,
$$
$$
y'_0\in L^2[a,b] \quad \text{if} \quad \nu\in(0,1/2], \quad \text{and} \quad y'_0\notin L^2[a,b] \quad \text{if} \quad \nu =0.
$$
Therefore
$$
\int\limits_a^by'^2(t)dt+\int\limits_a^b\frac{y_0''(t)}{y_0(t)}y^2(t)dt\geq 0.
$$
The application of  equality (\ref{f_difeq_th1}) yields Theorem \ref{th1}.

In the next proposition, we will show that the constant is sharp in the case $\nu = 0$.

\begin{proposition}\label{prop1} For any $\varepsilon_0 >0$ there exists a function $g\in C^1[a,b]$ such that $g(a) = 0$ and $g'\in L^2[a,b] $ and 
$$
\frac{1+4\lambda^2}{4}\int\limits_a^b\frac{g^2(t)}{t^2}dt+ \left(\frac{1}{4}+\varepsilon_0\right)
\int\limits_a^b\frac{g^2(t)}{t^2\log^2\frac{t}{a}}dt > \int\limits_a^b g'^2(t)dt.
$$
\end{proposition}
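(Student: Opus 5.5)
We test the inequality on a perturbation of the $\nu=0$ extremal candidate. Put $L=\log\frac{b}{a}$, $x=\log\frac{t}{a}$, and let $y_0(t)=\sqrt{t\,x}\,J_0(\lambda x)$, where $\lambda$ solves the Lamb type equation with $\nu=0$. We look for test functions of the form
$$
g_\delta(t)=\sqrt{t}\,x^{\frac12+\delta}J_0(\lambda x),\qquad \delta>0 \text{ small}.
$$
Near $t=a$ we have $g_\delta\sim x^{\frac12+\delta}$ and $g_\delta'\sim x^{\delta-\frac12}$. Hence $g_\delta(a)=0$ and $g_\delta'\in L^2[a,b]$ for every $\delta>0$. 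If genuine $C^1[a,b]$ regularity at $a$ is required, we replace $x^{\frac12+\delta}$ near $x=0$ by a smooth cutoff. Since the weights blow up only logarithmically, this changes the integrals below by a quantity that can be made arbitrarily small.

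The main step is to compute the defect
$$
D(g)=\int_a^b g'^2\,dt-\frac{1+4\lambda^2}{4}\int_a^b\frac{g^2}{t^2}\,dt-\frac14\int_a^b\frac{g^2}{t^2x^2}\,dt
$$
for $g=g_\delta$ and show that it stays bounded as $\delta\to0$. Integrating by parts exactly as in the proof of Theorem \ref{th1} gives
$$
D(g)=\Bigl[g g'\Bigr]_a^b-\int_a^b\Bigl(g''+\frac{1+4\lambda^2}{4t^2}g+\frac{g}{4t^2x^2}\Bigr)g\,dt .
$$
\begin{itemize}
\item \emph{Boundary term at $t=a$.} It behaves like $x^{2\delta}\to0$, so it vanishes.
\item \emph{Boundary term at $t=b$.} Because $\lambda$ satisfies the Lamb equation, it equals $\delta\, b^{-1}L^{-1}g^2(b)$, which is $O(\delta)$.
\item \emph{The integrand.} Write $g=y_0x^\delta$ and use (\ref{f_difeq_th1}) with $\nu=0$. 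Then
$$
g''+\Bigl(\frac{1+4\lambda^2}{4t^2}+\frac{1}{4t^2x^2}\Bigr)g=2y_0'(x^\delta)'+y_0(x^\delta)''.
$$
Near $t=a$ this equals $\dfrac{\delta(2\cdot\frac12+\delta-1)}{t^2x^2}\,g(1+o(1))$. Its product with $g$ is therefore $\dfrac{\delta^2 g^2}{t^2x^2}$ up to terms whose integral is $O(\delta)\cdot O(1)$. Note that $\int \frac{g^2}{t^2x^2}\,dt\sim\int x^{2\delta-1}\,dx$ is of order $\frac{1}{2\delta}$.
\end{itemize}
This scaling is the main obstacle. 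It has to be tracked carefully: one must show that all correction terms are $O(1)$ uniformly in $\delta$, or at worst $O(\delta)\cdot\frac1\delta$. Their contributions may cancel against the pieces from $2y_0'(x^\delta)'$, and those pieces should be expanded with $J_0'(\lambda x)=O(x)$.

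Granting this, we get $D(g_\delta)=O(1)$ as $\delta\to0$, while
$$
\int_a^b\frac{g_\delta^2}{t^2x^2}\,dt\ \ge\ \frac{c}{\delta}\to\infty .
$$
Consequently
$$
\int_a^b g_\delta'^2\,dt-\frac{1+4\lambda^2}{4}\int_a^b\frac{g_\delta^2}{t^2}\,dt-\Bigl(\frac14+\varepsilon_0\Bigr)\int_a^b\frac{g_\delta^2}{t^2x^2}\,dt
= D(g_\delta)-\varepsilon_0\int_a^b\frac{g_\delta^2}{t^2x^2}\,dt<0
$$
for $\delta$ small enough. This is exactly the strict inequality claimed in Proposition \ref{prop1}.

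An alternative, in case the Lamb-equation boundary bookkeeping is awkward, is to argue locally. Take $g(t)=x^{\frac12+\delta}\varphi(x)$ with $\varphi$ a cutoff equal to $1$ near $0$ and vanishing before $b$. The classical computation for $\int x^{2\delta-1}dx$ then shows that the constant $\frac14$ in front of the logarithmic term cannot be increased. The terms with $\frac{1+4\lambda^2}{4}$ are bounded and therefore irrelevant to the divergence.
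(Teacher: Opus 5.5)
Your main route is correct in substance and uses the paper's own test function: $g_\delta=\sqrt t\,x^{1/2+\delta}J_0(\lambda x)$ is the paper's $g_\varepsilon$ with $\varepsilon=2\delta$. The difference is that the paper gets the defect from an exact identity rather than from asymptotics.

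Write $g_\delta=y_0w$ with $w=x^\delta$. Then $g_\delta'-\frac{y_0'}{y_0}g_\delta=y_0w'$. In the first-order integration by parts from the proof of Theorem~\ref{th1}, both boundary terms vanish. At $b$ the Lamb equation says exactly that $y_0'(b)=0$. At $a$ one has $\frac{y_0'}{y_0}g_\delta^2\sim\frac12x^{2\delta}\to0$. Hence
\[
D(g_\delta)=\int_a^b y_0^2w'^2\,dt=\delta^2\int_a^b\frac{g_\delta^2}{t^2x^2}\,dt ,
\]
and the quantity in the proposition equals $(\varepsilon_0-\delta^2)\int_a^b g_\delta^2/(t^2x^2)\,dt$. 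This is positive as soon as $\delta^2<\varepsilon_0$, so no divergence as $\delta\to0$ is needed.

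In your second-order route, the step you flagged and then granted is where the whole proof sits, so as written the argument is incomplete. It does close. Expanding, the $\frac{\delta}{t^2x}$ terms cancel exactly, and with $V$ your potential,
\[
\Bigl(g''+Vg\Bigr)g=\frac{\delta^2g^2}{t^2x^2}+\frac{2\delta\lambda J_0'(\lambda x)}{J_0(\lambda x)}\,\frac{g^2}{t^2x}.
\]
The second term integrates to $2\delta\lambda\int_0^Lx^{2\delta}J_0(\lambda x)J_0'(\lambda x)\,dx=O(\delta)$, and the first to $\delta^2\cdot O(\delta^{-1})$. More simply, $(g''+Vg)g=(y_0^2w')'w$, and one more integration by parts gives the exact identity above. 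Your ``$(1+o(1))$'' is not uniform in $\delta$, which is why the bookkeeping looked delicate; the factorization removes the issue.

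Your fallback with $g=x^{1/2+\delta}\varphi(x)$ is a genuinely different and complete argument. Since $\int g'^2\,dt=\int (dg/dx)^2\,dx/t$ and $\int g^2/(t^2x^2)\,dt=\int x^{2\delta-1}\varphi^2\,dx/t$, the defect is $(\delta+\delta^2-\varepsilon_0)I_\delta+O(1)$ with $I_\delta\asymp\delta^{-1}$.
\begin{itemize}
\item What it buys: it uses neither $y_0$ nor the Lamb equation. It shows that the sharpness of $\frac14$ is purely local at $t=a$, regardless of the coefficient in front of $\int g^2/t^2$.
\item What the paper's route buys: near-extremals for the full inequality, with an explicit defect.
\end{itemize}

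One correction on regularity. The weight $t^{-2}\log^{-2}(t/a)$ behaves like $(t-a)^{-2}$, not logarithmically. Your $C^1$ cutoff still works: replacing $g_\delta$ by $g_\delta\chi(x/\eta)$ changes all the integrals by $o(1)$ as $\eta\to0$, and the new term in $\int g'^2$ is $O(\eta^{2\delta})$. The paper does not address the $C^1$ requirement at all.
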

\textbf{\textit{Proof of Proposition \ref{prop1}.}}  In the sequel we will use the function
$$
g_\varepsilon(t) = \sqrt{t}\left(\log\frac{t}{a}\right)^\frac{\varepsilon+1}{2} J_0\left(\lambda\log \frac{t}{a} \right).
$$
Consider the following difference
$$
A = 
 \frac{1+4\lambda^2}{4}\int\limits_a^b\frac{g_\varepsilon^2(t)}{t^2}dt+ \left(\frac{1}{4}+\varepsilon_0\right)
\int\limits_a^b\frac{g_\varepsilon^2(t)}{t^2\log^2\frac{t}{a}}dt- \int\limits_a^b g_\varepsilon'^2(t)dt .
$$
The computations in the proof of Theorem \ref{th1} give 
$$
A = \varepsilon\int\limits_a^b\frac{g_\varepsilon^2(t)}{t^2\log^2\frac{t}{a}}dt -\int\limits_a^b\left(g_\varepsilon'(t)-\frac{y_0'(t)}{y_0(t)}g_\varepsilon(t)\right)^2dt.
$$
One can show that 
$$
A = \varepsilon_0\int\limits_a^b\frac{J_0^2\left(\lambda\log \frac{t}{a} \right)}{t\left(\log \frac{t}{a} \right)^{1-\varepsilon}}dt -\frac{\varepsilon^2}{4} \int\limits_a^b\frac{J_0^2\left(\lambda\log \frac{t}{a} \right)}{t\left(\log \frac{t}{a} \right)^{1-\varepsilon}}dt.
$$
Obviously, $A> 0$ for sufficiently small $\varepsilon$.

Taking $\nu = 1/2$ in Theorem \ref{th1} and using the  well known identities for the Bessel function   
$$
J_{1/2}(x) = \sqrt\frac{2}{\pi}\frac{\sin x}{\sqrt x} \quad \text{and }\quad j_{1/2}=\pi,
$$
we get the following corollary.

\begin{corollary}
Let $a$ and $b$ be any fixed numbers with $0 < a < b < \infty$. Suppose $y:[a,b]\to \mathbb{R}$ is an absolutely continuous function such that $y(a) = 0$ and $y'\in L^2[a,b]$. Then
$$
\frac{1+4\lambda^2}{4}\int\limits_a^b\frac{y^2(t)}{t^2}dt\leq\int\limits_a^b y'^2(t)dt,
$$
where $\lambda$ is the first root of the following Lamb type equation
$$
\sin\left(\lambda \log\frac{b}{a}\right)+2\lambda \cos\left(\lambda\log \frac{b}{a} \right)=0.
$$
Moreover, the equality in this inequality is attained for 
$$
y_0(t) = \sqrt{t}\sin \left(\lambda\log \frac{t}{a} \right), \quad \lambda\in \left(0,\frac{\pi}{\log \frac{b}{a}}\right).
$$
\end{corollary}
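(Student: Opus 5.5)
The plan is to specialize Theorem \ref{th1} to $\nu=1/2$. Since $1-4\nu^2=0$ for this value, the second integral on the left-hand side of Theorem \ref{th1} disappears, and what remains is exactly the stated inequality
$$
\frac{1+4\lambda^2}{4}\int\limits_a^b\frac{y^2(t)}{t^2}dt\leq\int\limits_a^b y'^2(t)dt .
$$
The admissibility hypotheses on $y$ ($y(a)=0$, $y'\in L^2[a,b]$, absolute continuity) are identical to those of Theorem \ref{th1}. So the inequality itself requires no new work. The remaining tasks are to rewrite the Lamb type equation and the extremal function in elementary form, and to check the equality case.

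For the Lamb equation, set $L=\log\frac{b}{a}$ and $x=\lambda L$. Using $J_{1/2}(x)=\sqrt{2/\pi}\,x^{-1/2}\sin x$ we get
$$
J'_{1/2}(x)=\sqrt{2/\pi}\left(x^{-1/2}\cos x-\tfrac12 x^{-3/2}\sin x\right).
$$
Substitute this into
$$
\frac{L+1}{2L}J_{1/2}(x)+\lambda J'_{1/2}(x)=0
$$
and multiply by $\sqrt{\pi/2}\,x^{1/2}$. This gives
$$
\frac{L+1}{2L}\sin x+\lambda\cos x-\frac{\lambda}{2x}\sin x=0 .
$$
Since $\lambda/(2x)=1/(2L)$, the $\sin x$ terms combine to $\frac{1}{2}\sin x$. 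The equation therefore becomes $\sin(\lambda L)+2\lambda\cos(\lambda L)=0$, as claimed. The range $\lambda\in(0,j_{1/2}/L)$ becomes $(0,\pi/L)$ because $j_{1/2}=\pi$. Likewise,
$$
y_0(t)=\sqrt{t\log\tfrac{t}{a}}\,J_{1/2}\bigl(\lambda\log\tfrac{t}{a}\bigr)=\sqrt{2/(\pi\lambda)}\,\sqrt t\,\sin\bigl(\lambda\log\tfrac{t}{a}\bigr).
$$
Up to a harmless constant factor, this is the stated $y_0$.

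The main obstacle is the equality statement. Theorem \ref{th1} asserts attainment only for $\nu\in(0,1/4]$, and $\nu=1/2$ lies outside that range. So I would verify equality directly for $y_0(t)=\sqrt t\sin(\lambda\log\frac{t}{a})$ by rerunning the integration-by-parts argument from the proof of Theorem \ref{th1} with $y=y_0$. Then the square $\bigl(y'-\frac{y_0'}{y_0}y\bigr)^2$ vanishes identically. By (\ref{f_difeq_th1}) with $\nu=1/2$, the integrand term reproduces $-\frac{1+4\lambda^2}{4}\int_a^b y_0^2/t^2\,dt$. Hence the only possible defect is the boundary terms $\lim\frac{y_0'}{y_0}y_0^2=\lim y_0'y_0$ at the endpoints.

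At $t=a$ we have $y_0(a)=0$ and $y_0'$ is bounded, so that term is $0$. At $t=b$ the term is $y_0'(b)y_0(b)$, and we need it to vanish. Compute
$$
y_0'(t)=\frac{1}{2\sqrt t}\Bigl(\sin\bigl(\lambda\log\tfrac{t}{a}\bigr)+2\lambda\cos\bigl(\lambda\log\tfrac{t}{a}\bigr)\Bigr).
$$
Its value at $t=b$ is zero precisely by the Lamb equation. So the boundary term vanishes and equality holds. Finally, I would note that $y_0$ is smooth on $[a,b]$, has $y_0(a)=0$, and has $y_0'\in L^2[a,b]$, so it is admissible. This completes the corollary.
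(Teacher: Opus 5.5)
Your proposal is correct and follows the paper's own route: specialize Theorem~\ref{th1} to $\nu=1/2$, which kills the $\log^{-2}$ term, and use $J_{1/2}(x)=\sqrt{2/\pi}\,x^{-1/2}\sin x$ and $j_{1/2}=\pi$ to rewrite the Lamb equation and $y_0$ in elementary form. Your direct check of equality is a worthwhile addition, because the equality clause of Theorem~\ref{th1} is stated only for $\nu\in(0,1/4]$ (its proof in fact gives $y_0'\in L^2[a,b]$ for all $\nu\in(0,1/2]$); it uses the vanishing boundary term $y_0(b)y_0'(b)=0$ and your formula $y_0'(t)=\frac{1}{2\sqrt t}\bigl(\sin(\lambda\log\frac{t}{a})+2\lambda\cos(\lambda\log\frac{t}{a})\bigr)$, which correctly carries the factor $\frac12$ that the paper's displayed formula omits.
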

It is easy to show that
$$
y'_0(t) = t^{-1/2}\left(2 \lambda \cos\left(\lambda\log\left(\frac{t}{a}\right)\right) + \sin\left(\lambda \log\left(\frac{t}{a}\right)\right)\right).
$$
Therefore, if $\nu  =1/2$, then Theorem \ref{th1} implies inequality (\ref{DGI_Ineq}).

\section{Power-weighted inequalities}

In this section, we prove power-weighted inequalities on finite intervals.  The inequalities are generalization of the results from the previous section. The main result reads as:

\begin{theorem}\label{th2}
Let $q>0$, $s>0$, $\nu\in[0,\frac{1}{q}]$, $a$ and $b$ be any fixed numbers with $0 < a < b < \infty$. Suppose $y:[a,b]\to \mathbb{R}$ is  an absolutely continuous function such that $y(a) = 0$ and $y'\in L^2[a,b]$. Then
$$
    \frac{s^2}{4}\int\limits_a^b\frac{y^2(t)}{t^{s+1}}dt+\frac{q^2\lambda^2}{4} \int\limits_a^b\frac{y^2(t)}{t^{s+1} \log^{2-q}\frac{t}{a}}dt+\frac{1-\nu^2q^2}{4}\int\limits_a^b\frac{y^2(t)}{t^{s+1}\log^2\frac{t}{a}}dt\leq\int\limits_a^b \frac{y'^2(t)}{t^{s-1}}dt,
    $$
where $\lambda$ is the first root of the following Lamb type equation
\begin{equation}\label{th2_eq}
    \left(s+\frac{1}{\log\frac{b}{a}}\right)J_\nu \left(\lambda\left(\log \frac{b}{a}\right)^{\frac{q}{2}} \right)+q\lambda \left(\log \frac{b}{a} \right)^{\frac{q}{2}-1}J'_{\nu} \left(\lambda\left(\log \frac{b}{a}\right)^{\frac{q}{2}} \right)=0.
\end{equation}
In addition, if $\nu\in(0,\frac{1}{q}]$ then the equality in this inequality is attained for 
$$
Y_0(t) = t^{\frac{s}{2}}\sqrt{\log\frac{t}{a}} J_\nu \left(\lambda\left(\log \frac{t}{a}\right)^{q/2} \right), \quad \lambda\log \left(\log \frac{b}{a}\right)^{q/2} \in (0,j_\nu ).
$$
\end{theorem}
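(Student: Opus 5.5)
The proof will follow the same scheme as Theorem \ref{th1}, now with the weight $t^{1-s}$ and the test function $Y_0$. Set $x=\log\frac{t}{a}$ and $L=\log\frac{b}{a}$. The first step is a direct computation, using the Bessel equation $z^2J_\nu''+zJ_\nu'+(z^2-\nu^2)J_\nu=0$ with $z=\lambda x^{q/2}$. Writing $Y_0=t^{s/2}x^{1/2}J_\nu(z)$, I will verify that
$$
\frac{\left(t^{1-s}Y_0'(t)\right)'}{Y_0(t)} = -\frac{1}{t^{s+1}}\left(\frac{s^2}{4}+\frac{q^2\lambda^2}{4}x^{q}+\frac{1-\nu^2q^2}{4x^2}\right)x^{0}\cdot\text{(suitably)},
$$
that is, I will show the right-hand side equals the negative of exactly the three weights appearing in the statement. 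The powers $x^{q-2}$ and $x^{-2}$ come from the chain rule $dz/dx=\frac{q}{2}\lambda x^{q/2-1}$. The prefactor $t^{s/2}x^{1/2}$ is chosen precisely to kill the first-order terms in $x$, leaving $s^2/4$.

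With $\psi=Y_0'/Y_0$, the second step is to expand $0\le\int_a^b t^{1-s}\left(y'-\psi y\right)^2dt$ and integrate the cross term $-\int t^{1-s}\psi\, d(y^2)$ by parts. This gives
$$
\int_a^b \frac{y'^2}{t^{s-1}}dt+\int_a^b \frac{\left(t^{1-s}Y_0'\right)'}{Y_0}y^2dt\ \ge\ \lim_{t\to b-}t^{1-s}\psi y^2-\lim_{t\to a+}t^{1-s}\psi y^2 .
$$
This requires $Y_0>0$ on $(a,b]$, i.e.\ $\lambda L^{q/2}<j_\nu$, which is guaranteed by taking $\lambda$ to be the first root of (\ref{th2_eq}) in $(0,j_\nu L^{-q/2})$.

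The third step handles the boundary terms. At $b$, I will rewrite $t\psi(t)=\frac{s}{2}+\frac{1}{2x}+\frac{q}{2}\lambda x^{q/2-1}\frac{J_\nu'(z)}{J_\nu(z)}$. At $x=L$ this is exactly one half of the left side of (\ref{th2_eq}) divided by $J_\nu$, so the Lamb equation makes $\psi(b)=0$ and the term at $b$ vanishes. This is the whole point of the choice of $\lambda$, and I will also check that such a first root exists.

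At $a$, the small-argument asymptotics $J_\nu(z)\sim (z/2)^\nu/\Gamma(\nu+1)$ give $t\psi\sim(\frac12+\frac{\nu q}{2})/x$. Hence $(t-a)\psi\to$ const, and combined with $y^2(t)\le (t-a)\int_a^t y'^2$ this yields $y^2/(t-a)\to0$, so the limit at $a$ is $0$. Substituting the identity from the first step then gives the inequality.

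For equality when $\nu\in(0,1/q]$, I will check that $Y_0$ is admissible. Near $a$, $Y_0'\sim x^{(\nu q-1)/2}$ up to constants, so $Y_0'\in L^2$ iff $\nu q>0$, and $Y_0(a)=0$. Taking $y=Y_0$ makes the square vanish identically, and both boundary terms vanish.

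I expect the main obstacle to be the first step: the derivative computation must produce exactly the three stated coefficients. There is a risk the paper's weights, say $\log^{2-q}$, differ from what falls out, in which case I would adjust the exponent of $x$ or the argument of $J_\nu$. A secondary delicate point is the existence and location of the first root of (\ref{th2_eq}) below $j_\nu L^{-q/2}$. There I would argue via signs: the left side is positive near $\lambda=0$ and has sign $qJ_\nu'(j_\nu)<0$ at $\lambda=j_\nu L^{-q/2}$.
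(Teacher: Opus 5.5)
Your proposal is correct and is essentially the paper's proof. It uses the same test function $Y_0$, the same expansion of $\int_a^b t^{1-s}\bigl(y'-\frac{Y_0'}{Y_0}y\bigr)^2dt\ge 0$ with integration by parts, the same Cauchy--Schwarz plus small-argument Bessel asymptotics at $t=a$, and the same check that $Y_0'/t^{(s-1)/2}\in L^2$ exactly when $\nu q>0$. You are in fact more explicit than the paper at $t=b$, where the Lamb equation says precisely $Y_0'(b)=0$, and about the existence of the root below $j_\nu(\log\frac{b}{a})^{-q/2}$.

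The only repair needed is the garbled display in your first step. It should read $\frac{(t^{1-s}Y_0')'}{Y_0}=-\frac{1}{t^{s+1}}\bigl(\frac{s^2}{4}+\frac{q^2\lambda^2}{4}x^{q-2}+\frac{1-\nu^2q^2}{4x^2}\bigr)$, with weight $x^{q-2}$ rather than $x^{q}$. This follows from $u=x^{1/2}J_\nu(\lambda x^{q/2})$ satisfying $u''=-\bigl(\frac{q^2\lambda^2}{4}x^{q-2}+\frac{1-\nu^2q^2}{4x^2}\bigr)u$, so it matches the stated weights exactly.
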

\textbf{\textit{Proof of Theorem \ref{th2}}}. In the sequel we need some properties of the function $Y_0(t)$. Using the well known equality for the Bessel functions
$$
J'_\nu(z) = \frac{1}{2}\left(J_{\nu-1}(z)-J_\nu (z)\right),
$$
direct computations give 
$$
\frac{Y'_0(t)}{Y_0(t) } = \frac{1}{2t}\left(s+\frac{1}{\log\frac{t}{a}}+q\lambda \left(\log \frac{t}{a} \right)^{q/2-1}\frac{J'_{\nu} \left(\lambda\left(\log \frac{t}{a}\right)^{q/2} \right)}{J_\nu \left(\lambda\left(\log \frac{t}{a}\right)^{q/2} \right)}\right)=
$$
$$
\frac{1}{2t}\left(s+\frac{1}{\log\frac{t}{a}}+q\lambda \left(\log \frac{t}{a} \right)^{q/2-1}\left(\frac{J_{\nu-1} \left(\lambda\left(\log \frac{t}{a}\right)^{q/2} \right)}{J_\nu \left(\lambda\left(\log \frac{t}{a}\right)^{q/2} \right)}+\frac{J_{\nu+1} \left(\lambda\left(\log \frac{t}{a}\right)^{q/2} \right)}{J_\nu \left(\lambda\left(\log \frac{t}{a}\right)^{q/2} \right)}\right)\right)
$$

and
\begin{equation}\label{f_difeq}
\frac{t^2Y''_0(t)}{Y_0(t)} +(1-s)\frac{tY'_0(t)}{Y_0(t)}= -\frac{s^2}{4}-\frac{q^2\lambda^2}{4\log^{2-q}\frac{t}{a}}-\frac{1-\nu^2q^2}{4\log^2\frac{t}{a}}.
\end{equation}

We can prove that
$$
Y'_0/t^{(s-1)/2}\in L^2[a,b] \quad \text{if} \quad \nu\in(0,1/q], \quad \text{and} \quad Y'_0/t^{(s-1)/2} \notin L^2[a,b] \quad \text{if} \quad \nu =0.
$$
Using integrating by parts, we have
$$
0\leq \int\limits_a^b\frac{1}{t^{s-1}}\left(y'(t)-\frac{Y_0'(t)}{Y_0(t)}y(t)\right)^2dt=
$$
$$
= \int\limits_a^b\frac{y'^2(t)}{t^{s-1}}dt-\int\limits_a^b  \frac{1}{t^{s-1}}\frac{Y_0'(t)}{Y_0(t)} d y^2(t)+\int\limits_a^b  \frac{ y^2(t)}{t^{s-1}}\left(\frac{Y_0'(t)}{Y_0(t)} \right)^2dt=
$$
\begin{multline*}
\int\limits_a^b\frac{y'^2(t)}{t^{s-1}}dt+\int\limits_a^b\left(\frac{t^2Y_0''
(t)+(1-s)tY_0'(t)}{Y_0(t)}\right)\frac{y^2(t)}{t^{s+1}}dt
+\\ \lim_{t\to a+}\frac{y^2(t)}{t^{s-1}}\frac{Y'_0(t)}{Y_0(t)}-\lim_{t\to b-}\frac{y^2(t)}{t^{s-1}}\frac{Y'_0(t)}{Y_0(t)}.
\end{multline*}
Consequently, 
\begin{multline*}
\int\limits_a^by'^2(t)dt+\int\limits_a^b\left(\frac{t^2Y_0''
(t)+(1-s)tY_0'(t)}{Y_0(t)}\right)\frac{y^2(t)}{t^{s+1}}dt\geq
\\ \lim_{t\to b-}\frac{Y'_0(t)}{Y_0(t)}y^2(t)-\lim_{t\to a+}\frac{Y'_0(t)}{y_0(t)}y^2(t).
\end{multline*}
As a consequence of the condition $y'/t^{\frac{s-1}{2}}\in L^2[a,b]$ via the Cauchy-Schwartz inequality 
$$
y^2(t)\leq \left(\int\limits_a^t|y'(\tau)|d\tau\right)^2\leq \frac{t^{s}-a^s}{s}\int\limits_a^t\frac{|y'(\tau)|^2}{t^{s-1}}d\tau,
$$
we obtain $y^2(t)/(t^s-a^s)\to 0$ as $t\to a+$. 

Taking into account  the following asymptotic forms
$$
J_\nu(z) \sim \frac{1}{\Gamma(\nu+1)}\left(\frac{z}{2}\right)^\nu
$$
for  small arguments $z$, we obtain
$$
\frac{y'_0(t)}{y_0(t) } \sim \frac{1}{2t}\left(s+\frac{1}{\log\frac{t}{a}}+   \frac{\nu q }{\log \frac{t}{a}}\right).
$$
Hence
$$
\lim_{t\to a}(t^s-a^s)\frac{y'_0(t)}{y_0(t) } = \frac{sa^{s-1}}{2}\left(1+\nu q\right)\quad \text{and} \quad \lim_{t\to a+}\frac{y'_0(t)}{y_0(t)}y^2(t)=0.
$$
Therefore
$$
\int\limits_a^by'^2(t)dt+\int\limits_a^b\left(\frac{t^2Y''_0(t)}{Y_0(t)} +(1-s)\frac{tY'_0(t)}{Y_0(t)} \right)\frac{y^2(t)}{t^{s+1}}dt\geq 0.
$$
and
$$
\int\limits_a^by'^2(t)dt+\int\limits_a^b\left(\frac{t^2Y''_0(t)}{Y_0(t)} +(1-s)\frac{tY'_0(t)}{Y_0(t)} \right)\frac{y^2(t)}{t^{s+1}}dt\geq 0.
$$
To conclude the proof, it remains to use equality (\ref{f_difeq}). This completes the proof of Theorem \ref{th2}.

In the next proposition we will show that the constant is sharp in the case $\nu = 0$.

\begin{proposition}\label{prop2}For any $\varepsilon_0 >0$ there exists a function $g\in C^1[a,b]$ such that $g(a) = 0$ and $g'\in L^2[a,b] $ and 
  $$
    \frac{s^2}{4}\int\limits_a^b\frac{g^2(t)}{t^{s+1}}dt+\frac{q^2\lambda^2}{4} \int\limits_a^b\frac{g^2(t)}{t^{s+1} \log^{2-q}\frac{t}{a}}dt+\left(\frac{1}{4}+\varepsilon_0\right)\int\limits_a^b\frac{g^2(t)}{t^{s+1}\log^2\frac{t}{a}}dt >\int\limits_a^b \frac{g'^2(t)}{t^{s-1}}dt,
    $$
\end{proposition}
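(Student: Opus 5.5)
The plan mirrors the proof of Proposition \ref{prop1}, with $\nu=0$ throughout. I would use the test function
$$
g_\varepsilon(t) = t^{\frac{s}{2}}\left(\log\frac{t}{a}\right)^{\frac{1+\varepsilon}{2}} J_0\left(\lambda\left(\log \frac{t}{a}\right)^{q/2} \right), \qquad \varepsilon>0,
$$
which differs from $Y_0$ (with $\nu=0$) by the factor $\left(\log\frac{t}{a}\right)^{\varepsilon/2}$. Here $\lambda$ is the root of (\ref{th2_eq}) with $\nu=0$.

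\emph{Admissibility.} Near $t=a$ we have $g_\varepsilon(t)\sim c\,(\log\frac{t}{a})^{(1+\varepsilon)/2}$, so $g_\varepsilon(a)=0$. The derivative behaves like $(\log\frac{t}{a})^{(\varepsilon-1)/2}/t$, which lies in $L^2$ with the weight $t^{1-s}$ for every $\varepsilon>0$. Strictly speaking $g_\varepsilon\notin C^1[a,b]$ when $\varepsilon<1$. I would either read the class as absolutely continuous functions with a weighted $L^2$ derivative, as in Theorem \ref{th2}, or mollify near $a$ at the end, which changes all integrals only slightly.

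\emph{The identity.} Set $w=Y_0'/Y_0$. Expanding $\int_a^b t^{1-s}(g'-wg)^2dt$ and integrating by parts exactly as in the proof of Theorem \ref{th2}, then using (\ref{f_difeq}), gives
$$
A:=\text{LHS}-\int_a^b\frac{g_\varepsilon'^2}{t^{s-1}}dt=\varepsilon_0\int_a^b\frac{g_\varepsilon^2\,dt}{t^{s+1}\log^2\frac{t}{a}}-\int_a^b t^{1-s}\left(g_\varepsilon'-wg_\varepsilon\right)^2dt+\text{boundary terms}.
$$

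\emph{Boundary terms.} At $t=b$ the boundary term is $-b^{1-s}w(b)g_\varepsilon^2(b)$. This term vanishes because (\ref{th2_eq}) is precisely the condition $\bigl(t^{1-s}\cdot\text{const}\bigr)\,w(b)=0$: the bracket in the formula for $Y_0'/Y_0$ is zero at $t=b$. The paper's convention is that $Y_0'(b)=0$, and I would verify this against (\ref{th2_eq}). At $t=a$ the term behaves like $w\,g_\varepsilon^2\sim (\log\frac{t}{a})^{\varepsilon}\to0$.

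\emph{Explicit remainder.} Since $g_\varepsilon=(\log\frac{t}{a})^{\varepsilon/2}Y_0$, we get
$$
g_\varepsilon'-wg_\varepsilon=\frac{\varepsilon}{2t\log\frac{t}{a}}\,g_\varepsilon .
$$
Hence
$$
A=\left(\varepsilon_0-\frac{\varepsilon^2}{4}\right)\int_a^b\frac{J_0^2\bigl(\lambda(\log\frac{t}{a})^{q/2}\bigr)}{t\,(\log\frac{t}{a})^{1-\varepsilon}}\,dt .
$$
The integral is finite and positive for $\varepsilon>0$, so $A>0$ as soon as $\varepsilon<2\sqrt{\varepsilon_0}$.

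\emph{Main obstacle.} The delicate step is bookkeeping the boundary term at $b$. It requires confirming that the Lamb equation (\ref{th2_eq}) is exactly the condition that kills it; otherwise a sign condition on $w(b)$ would have to be checked. If the mollification route is used, a secondary point is showing that the perturbation can be made smaller than the fixed positive quantity $A$.
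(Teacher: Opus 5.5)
Your proposal is correct and follows the paper's proof: the same perturbation $g_\varepsilon=(\log\frac{t}{a})^{\varepsilon/2}Y_0$ with $\nu=0$, the same integration-by-parts identity, and the same remainder $\bigl(\varepsilon_0-\frac{\varepsilon^2}{4}\bigr)\int_a^b \frac{J_0^2\bigl(\lambda(\log\frac{t}{a})^{q/2}\bigr)}{t\,(\log\frac{t}{a})^{1-\varepsilon}}\,dt$. Your argument $\lambda(\log\frac{t}{a})^{q/2}$ in $J_0$ is the right one, whereas the paper's printed $J_0(\lambda\log\frac{t}{a})$ matches $Y_0$ only when $q=2$; your extra checks also fill in points the paper leaves implicit, namely that (\ref{th2_eq}) is exactly $Y_0'(b)=0$ so the boundary term at $b$ vanishes, and that $g_\varepsilon\notin C^1[a,b]$ for $\varepsilon<1$, so one must either enlarge the function class or truncate near $a$.
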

\textit{\textbf{Proof of Proposition \ref{prop2}.}} Consider the function  $g_\varepsilon$ defined by
$$
g_\varepsilon(t) = \sqrt{t^s}\left(\log\frac{t}{a}\right)^\frac{\varepsilon+1}{2} J_0\left(\lambda\log \frac{t}{a} \right).
$$
and the following difference
\begin{multline*}
A = \frac{s^2}{4}\int\limits_a^b\frac{g_\varepsilon ^2(t)}{t^{s+1}}dt+\frac{q^2\lambda^2}{4} \int\limits_a^b\frac{g_\varepsilon^2(t)}{t^{s+1} \log^{2-q}\frac{t}{a}}dt+\\ \left(\frac{1}{4}+\varepsilon_0\right)\int\limits_a^b\frac{g_\varepsilon^2(t)}{t^{s+1}\log^2\frac{t}{a}}dt -\int\limits_a^b \frac{g_\varepsilon'^2(t)}{t^{s-1}}dt.
\end{multline*}
Computations in the proof of Theorem \ref{th2} give that
$$
A = \varepsilon_0\int\limits_a^b\frac{g_\varepsilon^2(t)}{t^{s+1}\log^2\frac{t}{a}}dt -\int\limits_a^b\frac{1}{t^{s-1}}\left(g_\varepsilon'(t)-\frac{Y_0'(t)}{Y_0(t)}g_\varepsilon(t)\right)^2dt.
$$
One can show that 
$$
A = \varepsilon_0\int\limits_a^b\frac{J_0^2\left(\lambda\log \frac{t}{a} \right)}{t\left(\log \frac{t}{a} \right)^{1-\varepsilon}}dt -\frac{\varepsilon^2}{4} \int\limits_a^b\frac{J_0^2\left(\lambda\log \frac{t}{a} \right)}{t\left(\log \frac{t}{a} \right)^{1-\varepsilon}}dt.
$$
Obviously, $A> 0$ for sufficiently small $\varepsilon$.

The choice $\nu =1/2$ and $q=2$ in Theorem \ref{th2}, yields the following corollary.  
\begin{corollary}
    Let $q>0$, $s>0$, $a$ and $b$ be any fixed numbers with $0 < a < b < \infty$. Suppose $y:[a,b]\to \mathbb{R}$ is  an absolutely continuous function such that $y(a) = 0$ and $y'\in L^2[a,b]$. Then
    $$
   \left( \frac{s^2}{4}+\lambda^2\right)\int\limits_a^b\frac{y^2(t)}{t^{s+1}}dt\leq\int\limits_a^b \frac{y'^2(t)}{t^{s-1}}dt,
    $$
where $\lambda$ is the first root of the following Lamb type equation
$$
2\lambda \cos\left(\lambda\log \frac{b}{a} \right)+s\sin\left(\lambda \log\frac{b}{a}\right)=0.
$$
Moreover,  then the equality in this inequality is attained for 
$$
Y_0(t) = t^{\frac{s}{2}}\sin \left(\lambda  \log \frac{t}{a} \right), \quad \lambda\log \frac{b}{a}\in (0,\pi ).
$$
\end{corollary}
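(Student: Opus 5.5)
The corollary is the specialization $\nu=1/2$, $q=2$ of Theorem~\ref{th2}, exactly as the Corollary after Theorem~\ref{th1} specialized Theorem~\ref{th1}. The plan is to substitute these parameters into Theorem~\ref{th2} and simplify each piece using $J_{1/2}(x)=\sqrt{2/\pi}\,\sin x/\sqrt x$ and $j_{1/2}=\pi$.

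First, the admissibility condition $\nu\in[0,1/q]$ becomes $1/2\in[0,1/2]$, which holds. The coefficient of the third integral is $(1-\nu^2q^2)/4=(1-1)/4=0$, so that term drops out. Since $q=2$, the factor $\log^{2-q}(t/a)$ equals $1$, and the second term becomes $\lambda^2\int_a^b y^2t^{-s-1}dt$. Together with $s^2/4$ this gives the constant $s^2/4+\lambda^2$.

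Next I would rewrite the Lamb equation (\ref{th2_eq}) with $q=2$. Put $L=\log\frac ba$, $x=\lambda L$ and $J(x)=\sqrt{2/\pi}\,x^{-1/2}\sin x$. Then $J'(x)=\sqrt{2/\pi}\,\bigl(x^{-1/2}\cos x-\tfrac12 x^{-3/2}\sin x\bigr)$. Substituting into (\ref{th2_eq}) and multiplying by $\sqrt{\pi/2}\,x^{1/2}$ gives
\[
\Bigl(s+\tfrac1L\Bigr)\sin x+2\lambda\cos x-\tfrac{2\lambda}{2x}\sin x=0 .
\]
Because $x=\lambda L$, the last term equals $\sin x/L$ and cancels the $1/L$ term. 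What remains is $s\sin(\lambda L)+2\lambda\cos(\lambda L)=0$, which is the stated equation. The range $\lambda L\in(0,j_{1/2})=(0,\pi)$ transfers directly.

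Finally, the extremal function of Theorem~\ref{th2} becomes $t^{s/2}\sqrt{\log(t/a)}\,J_{1/2}(\lambda\log(t/a))$. Up to the constant factor $\sqrt{2/(\pi\lambda)}$, this is $t^{s/2}\sin(\lambda\log(t/a))$, and equality is invariant under scaling. Since $\nu=1/2\in(0,1/q]$, Theorem~\ref{th2} guarantees that equality is attained.

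The only step needing care is the Lamb-equation simplification, specifically the cancellation of the $1/L$ term. One should also check that the first root of the trigonometric equation lies in $(0,\pi/L)$. The function $2\lambda\cos(\lambda L)+s\sin(\lambda L)$ is positive near $0$ and equals $-2\pi/L<0$ at $\lambda=\pi/L$. So a root exists in this interval, and on it $Y_0$ is positive on $(a,b]$.

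One remark on the statement: the hypothesis ``$q>0$'' is vestigial and harmless, since $q=2$ is fixed here.
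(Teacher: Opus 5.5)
Your proposal is correct and takes the paper's route. The paper obtains this corollary by setting $\nu=1/2$, $q=2$ in Theorem~\ref{th2}, and you carry out that specialization explicitly: the cancellation of the $1/\log\frac{b}{a}$ term in the Lamb equation, the identification of the extremal function up to the factor $\sqrt{2/(\pi\lambda)}$, the check that the first positive root lies in $(0,\pi/\log\frac{b}{a})$, and the remark that the hypothesis $q>0$ is vestigial are all correct.
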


As a limit case as $q\to 0$, Theorem \ref{th2} presents the following statement. 

\begin{corollary}\label{cor3}
    Let $s>0$, $a$ and $b$ be any fixed numbers with $0 < a < b < \infty$. Suppose $y:[a,b]\to \mathbb{R}$ is  an absolutely continuous function such that $y(a) = 0$ and $y'\in L^2[a,b]$. Then
   $$
    \frac{s^2}{4}\int\limits_a^b\frac{y^2(t)}{t^{s+1}}dt+\frac{1}{4} \int\limits_a^b\frac{y^2(t)}{t^{s+1} \log^{2}\frac{t}{a}}dt\leq\int\limits_a^b \frac{y'^2(t)}{t^{s-1}}dt.
$$
\end{corollary}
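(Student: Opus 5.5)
Corollary \ref{cor3} can be obtained from Theorem \ref{th2} by letting $q\to 0^+$, or, more cleanly, by running the same ground-state argument with the limiting comparison function. Set $L=\log\frac{b}{a}$, $u=\log\frac{t}{a}$, and fix $\nu=0$, which is admissible for every $q>0$. Theorem \ref{th2} then gives
$$
\frac{s^2}{4}\int\limits_a^b\frac{y^2}{t^{s+1}}dt+\frac{q^2\lambda_q^2}{4}\int\limits_a^b\frac{y^2}{t^{s+1}u^{2-q}}dt+\frac14\int\limits_a^b\frac{y^2}{t^{s+1}u^2}dt\le\int\limits_a^b\frac{y'^2}{t^{s-1}}dt .
$$
The middle term is nonnegative, so dropping it already yields the corollary. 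No limit is needed: the case $\nu=0$ of the theorem gives the statement for any fixed $q>0$. The only hypothesis to check is that $\lambda_q$, the first root of (\ref{th2_eq}), exists.

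To make the dependence on Theorem \ref{th2} minimal, I would instead verify directly the identity behind the limiting case. Take the limiting ground state $Y(t)=t^{s/2}\sqrt{u}$, which is $Y_0$ with $J_0(\cdot)\to 1$ as $q\to0$. A direct computation gives
$$
\frac{tY'}{Y}=\frac{s}{2}+\frac{1}{2u}.
$$
From this,
$$
\frac{t^2Y''}{Y}+(1-s)\frac{tY'}{Y}=-\frac{s^2}{4}-\frac{1}{4u^2},
$$
which is (\ref{f_difeq}) with $q\to0$ and $\nu=0$. Now expand
$$
0\le\int\limits_a^b t^{1-s}\left(y'-\frac{Y'}{Y}y\right)^2dt
$$
and integrate by parts exactly as in the proof of Theorem \ref{th2}. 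This gives the desired inequality, up to the boundary terms
$$
\left[t^{1-s}\frac{Y'}{Y}y^2\right]_a^b .
$$

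These boundary terms are the main point to check. At $t=b$ the term equals $-b^{1-s}\frac{Y'(b)}{Y(b)}y^2(b)$, which is $\le0$ when moved to the correct side because $Y'(b)/Y(b)>0$ for $s>0$. At $t=a$ we have $\frac{Y'}{Y}\sim\frac{1}{2(t-a)}$. The Cauchy--Schwarz estimate from the proof of Theorem \ref{th2} gives $y^2(t)=o(t-a)$, so this term tends to $0$. Hence the inequality holds with the boundary contributions having the favourable sign, which proves Corollary \ref{cor3}.
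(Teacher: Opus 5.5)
Your argument is correct, but it takes a different route from the paper's.

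\textbf{The paper's route.} The paper obtains Corollary~\ref{cor3} as a limit $q\to0$ in Theorem~\ref{th2}, with the special choice $\nu q=s\log\frac{b}{a}+1$. Equation (\ref{th2_eq}) then reduces to $J_{\nu-1}=0$. The Bessel-zero asymptotic $\lim_{q\to0}qj_{p/q-1}=p$ gives $q\lambda\to s\log\frac{b}{a}+1$. The second and third coefficients, $\frac{(s\log\frac{b}{a}+1)^2}{4}$ and $\frac{1-(s\log\frac{b}{a}+1)^2}{4}$, then combine to $\frac14$. That choice has $\nu q>1$, outside the range $\nu\in[0,\frac1q]$ allowed in Theorem~\ref{th2}. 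Also, the limit under the integral is taken without comment, although $\log^{q-2}\frac{t}{a}\to\log^{-2}\frac{t}{a}$ non-uniformly near $t=a$.

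\textbf{Your first observation.} Setting $\nu=0$ and dropping the nonnegative middle term bypasses all of this. It also shows the corollary is strictly weaker than Theorem~\ref{th2} at $\nu=0$, for every $q>0$. The root you need does exist: with $z=\lambda(\log\frac{b}{a})^{q/2}$, equation (\ref{th2_eq}) becomes $(s\log\frac{b}{a}+1)J_0(z)=qzJ_1(z)$, and the two sides cross on $(0,j_0)$.

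\textbf{Your direct argument.} This is more elementary still. The identity for $Y=t^{s/2}\sqrt{u}$ is right, and no Bessel functions appear. Since $Y'(b)>0$, the boundary term at $b$ has the favourable sign, so it does not need to be killed by a Lamb equation.

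\textbf{A heuristic remark that is slightly off.} You describe $t^{s/2}\sqrt{u}$ as the $q\to0$ limit of $Y_0$ at $\nu=0$, but it is not. Because $\lambda(\log\frac{b}{a})^{q/2}\to j_0$, one gets $J_0(\cdot)\to0$, not $1$. After renormalising, the limit is $t^{s/2}\sqrt{u}\bigl(\frac{1}{sL+1}+\frac12\log\frac{L}{u}\bigr)$. This is the other solution of the same equation, and it has $Y'(b)=0$. The remark plays no role in your proof, which verifies the identity directly.
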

\textbf{\textit{Proof of Corollary \ref{cor3}.}} If $q = \frac{s\log\frac{b}{a}+1}{\nu}$, then taking into account the identity for the Bessel's functions 
$$
\nu J_\nu(z)+zJ'_\nu(z) = z J_{\nu-1}(z), 
$$
equation  (\ref{th2_eq}) has the following form 
$$
\lambda\left(\log \frac{b}{a}\right)^{q/2} J_{\nu-1}\left(\lambda\left(\log \frac{b}{a}\right)^{q/2}\right) = 0  
$$
Hence
$$
\quad \quad \lambda\left(\log \frac{b}{a}\right)^{q/2} = j_{\frac{s\log\frac{b}{a}+1}{q}-1}.
$$
It is known (see, for instance, \cite{AW_Lamb} ) that 
$$
\lim_{q\to 0} qj_{p/q-1}= p.
$$
Thus
$$
q\lambda  = s\log\frac{b}{a}+1 \quad\text{as} \quad q\to 0.
$$
Therefore
\begin{multline*}
        \frac{s^2}{4}\int\limits_a^b\frac{y^2(t)}{t^{s+1}}dt+\frac{(s\log\frac{b}{a}+1)^2}{4} \int\limits_a^b\frac{y^2(t)}{t^{s+1} \log^{2}\frac{t}{a}}dt+\\
        \frac{1-(s\log\frac{b}{a}+1)^2}{4}\int\limits_a^b\frac{y^2(t)}{t^{s+1}\log^2\frac{t}{a}}dt\leq\int\limits_a^b \frac{y'^2(t)}{t^{s-1}}dt.
\end{multline*}
Using the last inequality, we complete the proof of Corollary \ref{cor3}. 

\section{$L_2$-inequalities for functions with two zero boundary conditions}
In the previous section we considered continuously differentiable functions such that $y(a) = 0$ and we  used
$$
\lambda\in \left(0,\frac{j_\nu}{\left(\log \frac{b}{a}\right)^{q/2} }\right).
$$
If we consider continuously differentiable functions such that $y(a) = 0$ and $y(b) = 0$ then we  will be able to include the limiting case 
$$
\lambda = \frac{j_{n}}{\left(\log\frac{b}{a}\right)^{q/2}}.
$$
To achieve the goal we follow the proof of the previous theorem and in addition we will have
$$
y^2(b)\leq \left(\int\limits_{t}^b|y'(\tau)|d\tau\right)^2\leq \frac{b^{s}-t^s}{s}\int\limits_{t}^b\frac{|y'(\tau)|^2}{\tau^{s-1}}d\tau,
$$
and
\begin{multline*}
0\leq \lim_{t\to b-}\frac{y^2(t)}{t^{s-1}}\frac{Y'_0(t)}{Y_0(t)} = \\ \lim_{t\to b-} \frac{y^2(t)}{2t^s}\left(s+\frac{1}{\log\frac{t}{a}}+q\lambda \left(\log \frac{t}{a} \right)^{\frac{q}{2}-1}\frac{J'_{\nu} \left(\lambda\left(\log \frac{t}{a}\right)^{\frac{q}{2}} \right)}{J_\nu \left(\lambda\left(\log \frac{t}{a}\right)^{\frac{q}{2}} \right)}\right)=
\\
\frac{q J'_{\nu} \left(j_\nu \right) }{s\log \frac{b}{a}}\lim_{t\to b-} \frac{1-\frac{t^s}{b^{s}}}{J_\nu \left(j_\nu\left(\frac{\log \frac{t}{a}}{\log\frac{b}{a}}\right)^{q/2} \right)} \int\limits_{t}^b\frac{|y'(\tau)|^2}{\tau^{s-1}}d\tau =0.
\end{multline*}
Therefore the following theorem is valid.

\begin{theorem}\label{th3}    Let $q>0$, $s>0$, $\nu\in[0,\frac{1}{q}]$, $a$ and $b$ be any fixed numbers with $0 < a < b < \infty$. Suppose $y:[a,b]\to \mathbb{R}$ is  an absolutely continuous function such that $y(a) =y(b)= 0$ and $y'^2/t^{s-1}\in L^1[a,b]$. Then
\begin{multline}\label{th3_ineq}
    \frac{s^2}{4}\int\limits_a^b\frac{y^2(t)}{t^{s+1}}dt+\frac{q^2j_\nu^2}{4\log^2\frac{b}{a}} \int\limits_a^b\frac{y^2(t)}{t^{s+1} \log^{2-q}\frac{t}{a}}dt+\\ \frac{1-\nu^2q^2}{4}\int\limits_a^b\frac{y^2(t)}{t^{s+1}\log^2\frac{t}{a}}dt\leq\int\limits_a^b \frac{y'^2(t)}{t^{s-1}}dt.
\end{multline}
Moreover, if $\nu\in(0,\frac{1}{q}]$ then the equality in this inequality is attained for 
$$
Y_0(t) = t^{\frac{s}{2}}\sqrt{\log\frac{t}{a}} J_\nu \left(j_\nu\left(\frac{\log \frac{t}{a}}{\log\frac{b}{a}}\right)^{q/2} \right).
$$
\end{theorem}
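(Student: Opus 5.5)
The plan is to repeat the ground-state substitution argument from the proof of Theorem \ref{th2}, now with the endpoint value $\lambda=j_\nu(\log\frac{b}{a})^{-q/2}$. With this choice $Y_0$ vanishes at $t=b$, and $Y_0>0$ on $(a,b)$ because $J_\nu>0$ on $(0,j_\nu)$. Hence $Y_0'/Y_0$ is smooth on $(a,b)$. We start from
$$
0\leq \int\limits_a^b\frac{1}{t^{s-1}}\left(y'(t)-\frac{Y_0'(t)}{Y_0(t)}y(t)\right)^2dt .
$$
We expand the square and integrate the cross term by parts on a subinterval $[a+\delta,b-\delta]$, so that every boundary term is well defined. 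By the identity (\ref{f_difeq}), which holds for every $\lambda$ because it is just Bessel's equation after the change of variable $z=\lambda(\log\frac{t}{a})^{q/2}$, the interior integrand equals exactly minus the left-hand side weights of (\ref{th3_ineq}) times $y^2/t^{s+1}$. It therefore remains to show that both boundary terms $\frac{y^2(t)}{t^{s-1}}\frac{Y_0'(t)}{Y_0(t)}$ tend to $0$ as $t\to a+$ and as $t\to b-$, and then let $\delta\to0$ using monotone convergence on the positive weight terms.

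\textbf{Endpoint $a$.} This step is identical to Theorem \ref{th2}. The Cauchy--Schwarz estimate gives $y^2(t)=o(t^s-a^s)=o(t-a)$. The small-argument asymptotic $J_\nu(z)\sim (z/2)^\nu/\Gamma(\nu+1)$ gives $Y_0'/Y_0\sim \frac{1+\nu q}{2t\log(t/a)}=O\big((t-a)^{-1}\big)$. Their product tends to $0$. None of this uses the value of $\lambda$.

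\textbf{Endpoint $b$.} This is the new step and the main obstacle. Here $Y_0'/Y_0$ blows up, and only the second boundary condition $y(b)=0$ can compensate. We have
$$
\frac{Y_0'}{Y_0}=\frac{1}{2t}\Big(s+\frac{1}{\log\frac{t}{a}}+q\lambda(\log\tfrac{t}{a})^{q/2-1}\frac{J_\nu'(z)}{J_\nu(z)}\Big).
$$
Since $j_\nu$ is a simple zero, $J_\nu(z)\sim J_\nu'(j_\nu)(z-j_\nu)$ with $J'_\nu(j_\nu)\neq0$. Because $z$ is a smooth function of $t$ with nonzero derivative at $b$, this gives $|Y_0'/Y_0|\leq C/(b-t)$ near $b$. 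On the other side,
$$
y^2(t)\leq \frac{b^s-t^s}{s}\int\limits_t^b\frac{y'^2(\tau)}{\tau^{s-1}}d\tau=o(b-t).
$$
The product is therefore $o(1)$. One subtlety is the sign of this boundary term. It is negative near $b$, since $Y_0$ is decreasing to $0$ there. Since it enters as $-\lim_{t\to b-}$, a negative sign would already help, but we only need that the limit is $0$, and the estimate gives exactly that.

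\textbf{Equality.} For the equality statement with $\nu\in(0,\frac1q]$, we check that $Y_0$ is admissible. First, $Y_0(a)=Y_0(b)=0$. Second, $Y_0'^2/t^{s-1}\in L^1$: near $a$ we have $Y_0'\sim c\,(\log\frac{t}{a})^{(\nu q-1)/2}$, which is square integrable exactly when $\nu q>0$, and $Y_0$ is smooth near $b$. For $y=Y_0$ the square $(y'-\frac{Y_0'}{Y_0}y)^2$ vanishes identically. The boundary terms are computed directly: at $a$, $Y_0Y_0'\sim c\,(\log\frac ta)^{\nu q}\to0$; at $b$, $Y_0Y_0'\to0$ since $Y_0(b)=0$ and $Y_0'(b)$ is finite. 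Hence equality holds in (\ref{th3_ineq}).
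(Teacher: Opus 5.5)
Your route is the paper's own: the ground-state substitution with $Y_0$ at the endpoint value $\lambda=j_\nu(\log\frac ba)^{-q/2}$, identity (\ref{f_difeq}), the endpoint $a$ treated as in Theorem \ref{th2}, and the new boundary term at $b$ killed by $y^2(t)\le\frac{b^s-t^s}{s}\int_t^b y'^2(\tau)\tau^{1-s}\,d\tau$ against the simple zero of $J_\nu$ at $j_\nu$. Your truncation to $[a+\delta,b-\delta]$ with monotone convergence just makes the paper's limits rigorous.

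\textbf{The step that fails.} You assert that the interior integrand is exactly minus the weights on the left of (\ref{th3_ineq}). With this $\lambda$, identity (\ref{f_difeq}) gives the middle coefficient $\frac{q^2\lambda^2}{4}=\frac{q^2j_\nu^2}{4(\log\frac ba)^{q}}$. But (\ref{th3_ineq}) has $\frac{q^2j_\nu^2}{4\log^2\frac ba}$, and the two coincide only for $q=2$ (or $b=ea$). This is not cosmetic. For $\nu\in(0,\frac1q]$, your own equality argument shows that $Y_0$ turns the $(\log\frac ba)^q$-version into an equality, with all integrals finite and positive. So whenever $(\log\frac ba)^2<(\log\frac ba)^q$ (for instance $q<2$ and $b<ea$), the printed inequality is violated by $y=Y_0$. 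In the opposite case it holds, but equality is not attained at $Y_0$. The paper's proof has the same slip, since it also just inserts this $\lambda$ into Theorem \ref{th2}. What your argument (and the paper's) actually proves is (\ref{th3_ineq}) and the equality statement with $\log^2\frac ba$ replaced by $(\log\frac ba)^q$ in the middle coefficient. You should compute $q^2\lambda^2/4$ explicitly, flag the mismatch, and state that corrected version.

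\textbf{A smaller point.} Your aside that a negative boundary term at $b$ ``would already help'' is backwards. The expansion gives $\int_a^b\frac{y'^2}{t^{s-1}}dt-\int_a^b V\frac{y^2}{t^{s+1}}dt\ge\lim_{t\to b-}B(t)-\lim_{t\to a+}B(t)$, where $V$ is minus the right-hand side of (\ref{f_difeq}) and $B=t^{1-s}y^2Y_0'/Y_0$. Since $B\le0$ near $b$, a nonzero limit there would make this lower bound negative and destroy the conclusion. It is therefore essential, not merely sufficient, that this limit vanish, which your $o(b-t)$ estimate does show.
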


The choice $\nu =1/2$ and $q=2$ in Theorem \ref{th3}, yields the following corollary. 
 \begin{corollary}
    Let $s\in\mathbb{R}$, $a$ and $b$ be any fixed numbers with $0 < a < b < \infty$. Suppose $y:[a,b]\to \mathbb{R}$ is  an absolutely continuous function such that $y(a) =y(b)= 0$ and $y'^2/t^{s-1}\in L^1[a,b]$. Then
    $$
   \left( \frac{s^2}{4}+\frac{\pi^2}{\log^2\frac{b}{a}} \right)\int\limits_a^b\frac{y^2(t)}{t^{s+1}}dt\leq\int\limits_a^b \frac{y'^2(t)}{t^{s-1}}dt.
    $$
Moreover, the equality in this inequality is attained for 
$$
Y_0(t) = t^{\frac{s}{2}}\sin\left(\pi\frac{\log \frac{t}{a}}{\log\frac{b}{a}}\right).
$$
\end{corollary}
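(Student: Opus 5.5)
We obtain the statement by specializing Theorem~\ref{th3} to $\nu=1/2$ and $q=2$. Since Theorem~\ref{th3} is stated only for $s>0$, while the corollary allows $s\in\mathbb{R}$, we also rerun the ground-state argument directly with the explicit elementary extremal function. That rerun never uses the sign of $s$.

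\emph{Specialization.} With $\nu=1/2$, $q=2$ we have $\nu\in[0,1/q]$ and $1-\nu^2q^2=0$, so the term with $\log^{-2}\frac{t}{a}$ disappears. Also $\log^{2-q}\frac{t}{a}=1$, so the middle term becomes $\frac{4j_{1/2}^2}{4\log^2\frac{b}{a}}\int_a^b y^2t^{-s-1}dt=\frac{\pi^2}{\log^2\frac ba}\int_a^b y^2t^{-s-1}dt$, using $j_{1/2}=\pi$. This gives the stated constant $\frac{s^2}{4}+\frac{\pi^2}{\log^2\frac ba}$. For the extremal function, $J_{1/2}(x)=\sqrt{2/\pi}\,\sin x/\sqrt{x}$ gives
$$\sqrt{\log\tfrac ta}\;J_{1/2}\Big(\pi\,\tfrac{\log(t/a)}{\log(b/a)}\Big)=c\,\sin\Big(\pi\,\tfrac{\log(t/a)}{\log(b/a)}\Big)$$
with a constant $c>0$. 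Hence, up to a harmless constant factor, $Y_0$ from Theorem~\ref{th3} is $t^{s/2}\sin(\pi\log\frac ta/\log\frac ba)$. This settles the case $s>0$.

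\emph{Arbitrary $s$.} Put $L=\log\frac ba$ and $Y_0(t)=t^{s/2}\sin(\pi\log\frac ta/L)$. Write $u=\log t$; then $t\frac{d}{dt}=\frac{d}{du}$ and $t^2Y''=\ddot Y-\dot Y$. A direct computation then gives
$$t^2Y_0''+(1-s)tY_0'=\ddot Y_0-s\dot Y_0=-\Big(\tfrac{s^2}{4}+\tfrac{\pi^2}{L^2}\Big)Y_0,$$
which is the analogue of (\ref{f_difeq}). Now expand $0\le\int_a^b t^{1-s}\big(y'-\frac{Y_0'}{Y_0}y\big)^2dt$ and integrate by parts exactly as in the proof of Theorem~\ref{th2}. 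The weight $t^{1-s}$ is bounded above and below on $[a,b]$ for every real $s$, so the only new point is to control the boundary terms.

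\emph{Boundary terms, the main obstacle.} $Y_0$ has simple zeros at $a$ and $b$, so $\frac{Y_0'}{Y_0}\sim\frac1{t-a}$ as $t\to a+$ and $\frac{Y_0'}{Y_0}\sim-\frac{1}{b-t}$ as $t\to b-$. Cauchy--Schwarz gives
$$y^2(t)\le C\,(t-a)\int_a^t\tau^{1-s}y'^2\,d\tau,$$
with $C$ depending only on $a,b,s$, and the same bound holds near $b$ with $(b-t)$ and $\int_t^b$. Hence both boundary products tend to $0$, because the tail integrals vanish. The resulting inequality together with the identity above gives the claim.

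\emph{Equality.} Equality holds for $y=Y_0$. Indeed, $Y_0$ is smooth on $[a,b]$, so $Y_0'^2/t^{s-1}\in L^1$ and $Y_0$ is admissible. Moreover, the squared term $\big(Y_0'-\frac{Y_0'}{Y_0}Y_0\big)^2$ vanishes identically, so every inequality in the chain is an equality.
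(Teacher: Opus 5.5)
Your proof is correct. For $s>0$ it is exactly the paper's argument, which consists only of setting $\nu=1/2$, $q=2$ in Theorem~\ref{th3}. Your second part reruns the ground-state argument used for Theorems~\ref{th2} and~\ref{th3}, now with the elementary function $t^{s/2}\sin(\pi\log\frac ta/\log\frac ba)$, and this is not redundant. Theorem~\ref{th3} assumes $s>0$, so the paper's one-line derivation does not, as stated, cover the corollary's claim for $s\le 0$; your argument does.

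Your version also needs no Bessel asymptotics. It uses only three facts: $Y_0>0$ on $(a,b)$, $Y_0$ has simple zeros at $a$ and $b$, and $t^{1-s}$ is bounded above and below on $[a,b]$. This makes the boundary analysis at $b$ cleaner than the paper's.

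When writing it up, state explicitly that you integrate by parts on $[a+\varepsilon,b-\varepsilon]$ and then let $\varepsilon\to0$, since $Y_0'/Y_0$ is singular at both endpoints. Both boundary products enter the final inequality with a nonnegative, hence unfavourable, sign. So showing that they vanish, which you do, is genuinely needed and not just a convenience.
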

In the next proposition we will show that the constant also is sharp in the case $\nu = 0$.

\begin{proposition}\label{prop3} For any $\varepsilon_0 >0$ there exists a function $g\in C^1[a,b]$ such that $g(a) = g(b) =0$ and $g'\in L^2[a,b] $ and 
\begin{multline*}
    \frac{s^2}{4}\int\limits_a^b\frac{g^2(t)}{t^{s+1}}dt+\frac{q^2j_0^2}{4\log^2\frac{b}{a}} \int\limits_a^b\frac{g^2(t)}{t^{s+1} \log^{2-q}\frac{t}{a}}dt+ \\ \left(\frac{1}{4}+\varepsilon_0\right)\int\limits_a^b\frac{g^2(t)}{t^{s+1}\log^2\frac{t}{a}}dt >\int\limits_a^b \frac{g'^2(t)}{t^{s-1}}dt.
\end{multline*}
\end{proposition}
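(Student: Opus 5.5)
The plan is to repeat the argument of Propositions \ref{prop1} and \ref{prop2}, now using the extremal profile of Theorem \ref{th3} with $\nu=0$. Put $L=\log\frac{b}{a}$ and take
$$
Y_0(t)=t^{\frac{s}{2}}\sqrt{\log\frac{t}{a}}\,J_0\!\left(j_0\Big(\frac{\log\frac{t}{a}}{L}\Big)^{q/2}\right),\qquad
g_\varepsilon(t)=Y_0(t)\Big(\log\frac{t}{a}\Big)^{\varepsilon/2},\quad \varepsilon>0 .
$$
This choice gives the boundary conditions directly. Since $J_0(0)=1$, we have $g_\varepsilon(t)\sim t^{s/2}(\log\frac{t}{a})^{(1+\varepsilon)/2}\to0$ as $t\to a$. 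Since $J_0(j_0)=0$, we have $g_\varepsilon(b)=0$, and in fact $g_\varepsilon(t)=O(b-t)$ near $b$. For the integrability requirement, near $a$ we get $g_\varepsilon'(t)\asymp(\log\frac{t}{a})^{(\varepsilon-1)/2}$, so $g_\varepsilon'^2$ behaves like $(\log\frac{t}{a})^{\varepsilon-1}$, which is integrable exactly because $\varepsilon>0$. Away from $a$ the function is smooth, so $g_\varepsilon$ is admissible, with $g_\varepsilon'^2/t^{s-1}\in L^1$.

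Next, denote by $A$ the left-hand side of the claimed inequality minus the right-hand side, evaluated at $g_\varepsilon$. I would apply the ground-state identity from the proof of Theorem \ref{th2} to $g_\varepsilon$ on $[a+\delta,b-\delta]$. Identity (\ref{f_difeq}) holds with $\nu=0$ and $\lambda=j_0/L^{q/2}$, and the $\frac14$ in the statement matches the coefficient $\frac{1-\nu^2q^2}{4}$ at $\nu=0$. Together these give
$$
A=\varepsilon_0 I_\varepsilon-\int_a^b\frac{1}{t^{s-1}}\Big(g_\varepsilon'-\frac{Y_0'}{Y_0}g_\varepsilon\Big)^2dt+(\text{boundary terms}),\qquad
I_\varepsilon=\int_a^b\frac{g_\varepsilon^2(t)\,dt}{t^{s+1}\log^2\frac{t}{a}} .
$$
The boundary terms are limits of $\frac{g_\varepsilon^2}{t^{s-1}}\frac{Y_0'}{Y_0}$ as $\delta\to0$, and I expect both to vanish.

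\begin{itemize}
\item At $b$: $Y_0'/Y_0\sim -1/(b-t)$, because $J_0$ has a simple zero at $j_0$, while $g_\varepsilon^2=O((b-t)^2)$. The product is $O(b-t)\to0$.
\item At $a$: $Y_0'/Y_0\asymp 1/\log\frac{t}{a}$, while $g_\varepsilon^2\asymp(\log\frac{t}{a})^{1+\varepsilon}$. The product is $O((\log\frac{t}{a})^{\varepsilon})\to0$.
\end{itemize}

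Checking these two limits carefully, rather than borrowing the Cauchy--Schwarz bounds stated for general $y$, is the one step that needs real care. It is exactly where $\varepsilon>0$ is used.

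For the remaining integral, write $g_\varepsilon=Y_0\varphi$ with $\varphi=(\log\frac{t}{a})^{\varepsilon/2}$. Then $g_\varepsilon'-\frac{Y_0'}{Y_0}g_\varepsilon=Y_0\varphi'$ and $\varphi'=\frac{\varepsilon}{2t}(\log\frac{t}{a})^{\varepsilon/2-1}$. Substituting, one finds
$$
\frac{1}{t^{s-1}}Y_0^2\varphi'^2=\frac{\varepsilon^2}{4}\,\frac{J_0^2\big(j_0(\log\frac{t}{a}/L)^{q/2}\big)}{t\,(\log\frac{t}{a})^{1-\varepsilon}}=\frac{\varepsilon^2}{4}\,\frac{g_\varepsilon^2}{t^{s+1}\log^2\frac{t}{a}} .
$$
Hence $A=\big(\varepsilon_0-\frac{\varepsilon^2}{4}\big)I_\varepsilon$. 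Here $I_\varepsilon$ is finite and positive, since its integrand is comparable to $(\log\frac{t}{a})^{\varepsilon-1}/t$ near $a$. Choosing $0<\varepsilon<2\sqrt{\varepsilon_0}$ gives $A>0$, which is the claim with $g=g_\varepsilon$.

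Finally, $g_\varepsilon$ is only $C^1$ on $(a,b]$, because $g_\varepsilon'$ blows up at $a$ when $\varepsilon<1$. If genuine $C^1[a,b]$ regularity is required, I would either take $\varepsilon\ge1$ when $\varepsilon_0$ is large, or smooth $g_\varepsilon$ near $a$. Since the strict inequality $A>0$ is stable under small perturbations in the relevant weighted norms, the smoothed function still satisfies it.
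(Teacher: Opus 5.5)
Your argument is the paper's proof: it uses the same test function $g_\varepsilon=Y_0(\log\frac{t}{a})^{\varepsilon/2}$ and the same ground-state identity from Theorem~\ref{th2}, giving $A=(\varepsilon_0-\frac{\varepsilon^2}{4})I_\varepsilon$, and your explicit checks of the two boundary terms and of the failure of $C^1[a,b]$ regularity at $a$ fill in points the paper skips. One caveat, shared with the paper: with $\lambda=j_0/(\log\frac{b}{a})^{q/2}$, identity (\ref{f_difeq}) gives the coefficient $\frac{q^2j_0^2}{4\log^{q}\frac{b}{a}}$ rather than the stated $\frac{q^2j_0^2}{4\log^{2}\frac{b}{a}}$, so for $q\neq2$ your formula for $A$ picks up an extra term $\bigl(\frac{q^2j_0^2}{4\log^{2}\frac{b}{a}}-\frac{q^2j_0^2}{4\log^{q}\frac{b}{a}}\bigr)\int_a^b\frac{g_\varepsilon^2(t)\,dt}{t^{s+1}\log^{2-q}\frac{t}{a}}$; this term stays bounded as $\varepsilon\to0$ while $I_\varepsilon\to\infty$, so you should take $\varepsilon$ sufficiently small rather than merely $\varepsilon<2\sqrt{\varepsilon_0}$.
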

\textbf{\textit{Proof of Proposition \ref{prop3}.}} To prove the statement  we apply the proof of Proposition \ref{prop2} to the function 
$$
g_\varepsilon(t) = \sqrt{t^s}\left(\log\frac{t}{a}\right)^\frac{\varepsilon+1}{2} J_0\left(j_\nu \left(\frac{\log \frac{t}{a}}{\log \frac{b}{a}}\right)^{q/2} \right).
$$ 

\section{An $L_p$-version}

In this section, $L_p$-inequalities for $p\geq 2$ are considered.    The main result of this part is the following theorem. 

\begin{theorem}\label{th4}
Suppose $p\geq 2$, $q>0$, $s>0$, $\nu\in[0,\frac{1}{q}]$, $a$ and $b$ are any fixed numbers with $0 < a < b < \infty$. Let $y:[a,b]\to \mathbb{R}$ be  an absolutely continuous function such that $y(a) = 0$ and $y'/t^{(s+1)/p}\in L^p[a,b]$. Then
       $$
\left(\frac{p}{s}\right)^p\int\limits_a^b\frac{|y'(t)|^{p}}{t^{s+1-p}}dt\geq \int\limits_a^b\left(1+\frac{pq^2\lambda^2}{2s^2\log^{2-q}\frac{t}{a}}+\frac{p(1-\nu^2q^2)}{2s^2\log^2\frac{t}{a}}\right) \frac{|y(t)|^p}{t^{s+1}}dt,
$$
where $\lambda$ is the first root of the following Lamb type equation
$$
\left(s+\frac{1}{\log\frac{b}{a}}\right)J_\nu \left(\lambda\left(\log \frac{b}{a}\right)^{q/2} \right)+q\lambda \left(\log \frac{b}{a} \right)^{q/2-1}J'_{\nu} \left(\lambda\left(\log \frac{b}{a}\right)^{q/2} \right)=0.
$$
\end{theorem}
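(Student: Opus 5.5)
The plan is to reduce Theorem \ref{th4} to the $L_2$-result of Theorem \ref{th2}. I will apply Theorem \ref{th2} to the auxiliary function $z(t)=|y(t)|^{p/2}$ and then go back to $y$ using H\"older's inequality and Bernoulli's inequality. Throughout, $s,q,\nu,\lambda$ are the same as in Theorem \ref{th2}, and $\lambda$ solves the same Lamb type equation (\ref{th2_eq}). We may assume $D:=\int_a^b |y'|^p t^{p-s-1}\,dt<\infty$, otherwise there is nothing to prove.

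\emph{Step 1.} Since $p\geq 2$, the function $z=|y|^{p/2}$ is absolutely continuous, $z(a)=0$, and a.e.
$$|z'|=\tfrac{p}{2}|y|^{p/2-1}|y'|.$$
Put
$$I=\int_a^b\frac{|y|^p}{t^{s+1}}dt,\qquad E=\int_a^b\Big(\frac{q^2\lambda^2}{s^2\log^{2-q}\frac{t}{a}}+\frac{1-\nu^2q^2}{s^2\log^2\frac{t}{a}}\Big)\frac{|y|^p}{t^{s+1}}dt\ \ (\geq 0).$$
Note that $E\geq 0$ because $\nu q\le 1$. Applying Theorem \ref{th2} to $z$, and noting $z^2=|y|^p$, gives
$$\frac{s^2}{4}(I+E)\leq \frac{p^2}{4}\int_a^b |y|^{p-2}|y'|^2 t^{1-s}\,dt .$$
Before applying the theorem I must check that $z'/t^{(s-1)/2}\in L^2$. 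This follows from Step 2 (H\"older) and the finiteness of $D$ and $I$. Finiteness of $I$ comes from $y$ being bounded on $[a,b]$ with $a>0$.

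\emph{Step 2.} I split
$$t^{1-s}=t^{-(s+1)(p-2)/p}\cdot t^{2-2(s+1)/p}$$
and apply H\"older with exponents $p/(p-2)$ and $p/2$. This gives
$$\int_a^b |y|^{p-2}|y'|^2t^{1-s}dt\leq I^{1-2/p}D^{2/p},$$
since $\big(t^{2-2(s+1)/p}\big)^{p/2}=t^{p-s-1}$. For $p=2$ this step is an identity. Combining with Step 1 and raising to the power $p/2$, I get
$$\Big(\frac{s}{p}\Big)^p (I+E)^{p/2}I^{1-p/2}\leq D$$
when $I>0$; the case $I=0$ is trivial.

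\emph{Step 3.} Write $(I+E)^{p/2}I^{1-p/2}=I(1+E/I)^{p/2}$. Since $p/2\ge1$ and $E/I\ge 0$, Bernoulli's inequality gives $(1+E/I)^{p/2}\geq 1+\frac{p}{2}\frac{E}{I}$. Hence
$$\Big(\frac{p}{s}\Big)^pD\geq I+\frac p2 E.$$
This is exactly the claimed inequality: $\frac p2E$ produces the coefficients $\frac{pq^2\lambda^2}{2s^2}$ and $\frac{p(1-\nu^2q^2)}{2s^2}$.

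The computations themselves are routine. I expect the main obstacle to be the justification in Step 1 that $z$ satisfies the hypotheses of Theorem \ref{th2}, in particular its boundary analysis near $t=a$. The fallback is to run the integration-by-parts argument of Theorem \ref{th2} directly on $z$. The only boundary input needed there is $z^2(t)/(t^s-a^s)=|y(t)|^p/(t^s-a^s)\to0$ as $t\to a+$, and this follows from H\"older applied to $y(t)=\int_a^t y'$ together with $D<\infty$.
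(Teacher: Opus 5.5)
Your proof is correct, but it differs from the paper's in the step that converts the $L_2$-type bound into the $L_p$ statement.

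The first half matches the paper. The paper first reduces to nonnegative nondecreasing $y$ (via $\int_a^t|g'|$). It then reruns the integration by parts with the square $t^{1-s}y^{p-2}\bigl(y'-\frac{2}{p}\frac{Y_0'}{Y_0}y\bigr)^2$, which equals $\frac{4}{p^2}t^{1-s}\bigl(z'-\frac{Y_0'}{Y_0}z\bigr)^2$ for $z=y^{p/2}$. Invoking Theorem~\ref{th2} for $z=|y|^{p/2}$ therefore gives the same estimate $I+E\le\frac{p^2}{s^2}J$, with $J=\int_a^b|y|^{p-2}|y'|^2t^{1-s}\,dt$. Your version gets it more economically: it needs no monotone reduction and inherits the boundary analysis.

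The routes differ in how $J$ is traded for $I$ and $D$:
\begin{itemize}
\item The paper applies the pointwise weighted AM--GM inequality $A^{1-2/p}B^{2/p}\le(1-\frac{2}{p})A+\frac{2}{p}B$ with $A=|y|^pt^{-s-1}$ and $B=(p/s)^p|y'|^pt^{p-s-1}$. The paper writes $t^{-s}$ in $A$, but $t^{-s-1}$ is what makes the powers of $t$ match. Integrating gives $I+E\le(1-\frac{2}{p})I+\frac{2}{p}(p/s)^pD$, and the linear claim follows at once without dividing by $I$.
\item You use H\"{o}lder, $J\le I^{1-2/p}D^{2/p}$, followed by Bernoulli.
\end{itemize}

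The integrated Young inequality is just H\"{o}lder followed by AM--GM applied to the two numbers $I$ and $(p/s)^pD$. Your route therefore passes through the stronger intermediate bound $(p/s)^pD\ge I\,(1+E/I)^{p/2}$, of which the theorem's coefficient $\frac{p}{2}$ on the extra terms is only the linearization. The paper's route is shorter and handles $I=0$ automatically.
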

\textbf{\textit{Proof of Theorem \ref{th4}.}} Without loss of generality, we can assume that $y$ is a positive and non-decreasing decrease. Indeed, if $g$ is an arbitrary absolutely continuous function such that $g(a)=0$ and
$$
y(t)=\int_0^t |g'(\tau)|d\tau
$$
and
\begin{equation}\nonumber
\int\limits_a^b y^p(t)w(t)dt\leq C_1\int\limits_a^b y'^p(t)v(t)dt,
\end{equation}
where  $C_1$ is a constant  and  $w$, $v$ are weight functions, then since 
$$
|g(t)|\leq\int_0^t|g'(\tau)|dt=y(t), \quad y'(t)=|g'(t)|,
$$
we have the inequality for any arbitrary absolutely continuous function
$$
\int\limits_a^b|g(t)|^pw(t)dt\leq \int\limits_a^b y^p(t) w(t)dt\leq C_1\int\limits_a^b y'^p(t)v(t)dt=C_1\int\limits_a^b |g'(t)|^pv(t)dt.
$$

Consider the function 
$$
Y_0(t) = t^{\frac{s}{2}}\sqrt{\log\frac{t}{a}} J_\nu \left(\lambda\left(\log \frac{t}{a}\right)^{q/2} \right), \quad \lambda\log \left(\log \frac{b}{a}\right)^{q/2} \in (0,j_\nu ).
$$  
Note that we use the same function as in the previous section. Using integrating by parts we have
$$
0\leq \int\limits_a^b\frac{y^{p-2}(t)}{t^{s-1}}\left(y'(t)-\frac{2}{p}\frac{Y_0'(t)}{Y_0(t)}y(t)\right)^2dt=
$$
$$
= \int\limits_a^b\frac{y^{p-2}(t) y'^2(t)}{t^{s-1}}dt-\frac{4}{p^2}\int\limits_a^b  \frac{1}{t^{s-1}}\frac{Y_0'(t)}{Y_0(t)} d y^p(t)+\frac{4}{p^2}\int\limits_a^b  \frac{ y^p(t)}{t^{s-1}}\left(\frac{Y_0'(t)}{Y_0(t)} \right)^2dt=
$$
$$
\int\limits_a^b\frac{y^{p-2}(t) y'^2(t)}{t^{s-1}}dt+\frac{4}{p^2}\int\limits_a^b\left(\frac{t^2Y_0''
(t)+(1-s)tY_0'(t)}{Y_0(t)}\right)\frac{y^p(t)}{t^{s+1}}dt
+
$$
$$
\frac{4}{p^2}\lim_{t\to a+}\frac{y^p(t)}{t^{s-1}}\frac{Y'_0(t)}{Y_0(t)}-\frac{4}{p^2}\lim_{t\to b-}\frac{y^p(t)}{t^{s-1}}\frac{Y'_0(t)}{Y_0(t)}.
$$
Hence 
$$
\int\limits_a^b\frac{y^{p-2}(t) y'^2(t)}{t^{s-1}}dt+\frac{4}{p^2}\int\limits_a^b\left(\frac{t^2Y_0''
(t)+(1-s)tY_0'(t)}{Y_0(t)}\right)\frac{y^2(t)}{t^{s+1}}dt
$$
$$
\geq
\frac{4}{p^2}\lim_{t\to b-}\frac{Y'_0(t)}{Y_0(t)}\frac{y^p(t)}{t^{s-1}}-\frac{4}{p^2}\lim_{t\to a+}\frac{Y'_0(t)}{y_0(t)}\frac{y^p(t)}{t^{s-1}}.
$$
As a consequence of the condition $y'/t^{\frac{s+1-p}{p}}\in L^p[a,b]$ via  H\"{o}lder's inequality 
$$
y^p(t)\leq \left(\int\limits_a^t|y'(\tau)|d\tau\right)^p\leq \left(\int\limits_a^t \tau^{\frac{s-p+1}{p-1}}\right)\int\limits_a^t\frac {|y'(\tau)|^p}{t^{s+1-p}}d\tau
$$
$$
\leq \left(\frac{p-1}{s}\right)^{p-1}\left(t^{\frac{s}{p-1}}-a^{\frac{s}{p-1}}\right)^{p-1}\int\limits_a^t\frac {|y'(\tau)|^p}{t^{s+1-p}}d\tau,
$$
we obtain $y^p(t)/\left(t^{\frac{s}{p-1}}-a^{\frac{s}{p-1}}\right)^{p-1}\to 0$ as $t\to a+$.

Taking into account  the following asymptotic forms
$$
\frac{Y'_0(t)}{Y_0(t) } \sim \frac{1}{2t}\left(s+\frac{1}{\log\frac{t}{a}}+   \frac{\nu q }{\log \frac{t}{a}}\right).
$$
for  small arguments $z$, we have
$$
\lim_{t\to a}\left(t^{\frac{s}{p-1}}-a^{\frac{s}{p-1}}\right)^{p-1}\frac{Y'_0(t)}{Y_0(t) } = s \frac{1+\nu q}{2a} \lim_{t\to a} t^{\frac{s}{p-1}} ( t^{\frac{s}{ p-1}} -a^{\frac{s}{p-1}})^{p-2}
$$
and
$$
\lim_{t\to a+}\frac{Y'_0(t)}{Y_0(t)}\frac{y^p(t)}{t^{s-1}}=0.
$$
Therefore
$$
\int\limits_a^b\frac{y^{p-2}(t)y'^{2}(t)}{t^{s-1}}dt+\int\limits_a^b\left(-\frac{s^2}{p^2}-\frac{q^2\lambda^2}{p^2\log^{2-q}\frac{t}{a}}-\frac{1-\nu^2q^2}{p^2\log^2\frac{t}{a}}\right) \frac{y^p(t)}{t^{s+1}}dt\geq 0
$$
and
$$
\frac{p^2}{s^2}\int\limits_a^b\frac{y^{p-2}(t)y'^{2}(t)}{t^{s-1}}dt\geq \int\limits_a^b\left(1+\frac{q^2\lambda^2}{s^2\log^{2-q}\frac{t}{a}}+\frac{1-\nu^2q^2}{s^2\log^2\frac{t}{a}}\right) \frac{y^p(t)}{t^{s+1}}dt\geq 0.
$$

Using the inequality from \cite{HLP} 
$$
a^{p_1}b^{p_2}\leq \left(\frac{p_1a+p_2b}{p_1+p_2}\right)^{p_1+p_2},
$$
for 
$$
a= \frac{y^p(t)}{t^s}, \quad b =\frac{p^p}{s^p} \frac{y'^p(t)}{t^{s+1-p}},\quad p_1 = 1- \frac{2}{p} \quad \text{ and  }\quad p_2 = \frac{2}{p},
$$
we have 
$$
\frac{p^p}{s^p}\int\limits_a^b\frac{y'^{p}(t)}{t^{s-1}}dt\geq \int\limits_a^b\left(1+\frac{pq^2\lambda^2}{2s^2\log^{2-q}\frac{t}{a}}+\frac{p(1-\nu^2q^2)}{2s^2\log^2\frac{t}{a}}\right) \frac{y^p(t)}{t^{s+1}}dt.
$$
By equality (\ref{f_difeq}) we  complete the proof of Theorem \ref{th4}.

The application of the theorem  with $\nu =1/2$, $q= 2$ and $s=p-1$ gives
\begin{corollary}
Let $p\geq 2$, $a$ and $b$ be any fixed numbers with $0 < a < b < \infty$. Suppose $y:[a,b]\to \mathbb{R}$ is an absolutely continuous function such that $y(a) = 0$ and $y'\in L^p[a,b]$. Then
    $$
\left(\frac{p}{p-1}\right)^p\left(1+\frac{2p\lambda^2}{(p-1)^2}\right)^{-1}\int\limits_a^b\frac{|y'(t)|^{p}}{t^{p-2}}dt\geq \int\limits_a^b \frac{|y(t)|^p}{t^{p}}dt,
$$
where $\lambda$ is the first root of the following Lamb type equation
$$
2\lambda \cos\left(\lambda\log \frac{b}{a} \right)+s\sin\left(\lambda \log\frac{b}{a}\right) =0.
$$
\end{corollary}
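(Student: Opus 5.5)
The corollary is the case $\nu=1/2$, $q=2$, $s=p-1$ of Theorem~\ref{th4}, so the plan is to substitute these values and simplify both the weights and the Lamb type equation.

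\emph{Weights and additional terms.} With $s=p-1$ the weight on the right-hand side becomes $t^{s+1-p}=t^{0}$. The intended weight $t^{p-2}$ corresponds to the power-weighted normalization $t^{s-1}$ used in the $L_2$ step of the proof. Since the corollary is stated with $t^{p-2}$, I will follow the displayed final inequality of the proof of Theorem~\ref{th4}, whose right-hand side carries $t^{s-1}=t^{p-2}$. On the left, $t^{s+1}=t^p$. With $q=2$ and $\nu=1/2$ we have $\log^{2-q}\frac{t}{a}=1$ and $1-\nu^2q^2=0$. The last additional term therefore vanishes, and the bracket becomes the constant
$$1+\frac{2p\lambda^2}{(p-1)^2}.$$
Moving this constant to the other side, and applying the reduction to nonnegative nondecreasing $y$ already justified in the proof, gives the stated inequality for $|y|$.

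\emph{Lamb type equation.} Substitute $q=2$ and $\nu=1/2$ into the equation of Theorem~\ref{th4}, writing $L=\log\frac{b}{a}$ and $x=\lambda L$. It becomes
$$\Bigl(s+\tfrac1L\Bigr)J_{1/2}(x)+2\lambda J_{1/2}'(x)=0.$$
Next use $J_{1/2}(x)=\sqrt{2/\pi}\,\sin x/\sqrt x$. This gives
$$J_{1/2}'(x)=\sqrt{2/\pi}\,\bigl(\cos x/\sqrt x-\sin x/(2x^{3/2})\bigr),$$
so $2\lambda J_{1/2}'(x)$ equals $\sqrt{2/\pi}\,x^{-1/2}$ times $2\lambda\cos x-\sin x/L$. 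The $1/L$ terms cancel, and after multiplying through by $\sqrt{\pi x/2}$ the equation reduces to
$$2\lambda\cos\bigl(\lambda L\bigr)+s\sin\bigl(\lambda L\bigr)=0,$$
which is the displayed equation with $s=p-1$. The first root lies in $(0,\pi/L)$, consistent with $j_{1/2}=\pi$, exactly as in the earlier $L_2$ corollary.

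\emph{Main obstacle.} The only real check is consistency of the powers of $t$ between the statement of Theorem~\ref{th4} and the inequality actually derived in its proof. I would confirm the corollary directly from the last displayed inequality of that proof, specialized to these parameters, rather than from the theorem's headline weight.
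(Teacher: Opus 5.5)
\textbf{Verdict: the proposal has a genuine gap, namely the choice of weight on $|y'|^p$.} Your specialization ($\nu=1/2$, $q=2$, $s=p-1$ in Theorem~\ref{th4}) is exactly the paper's one-line proof. Your reduction of the Lamb equation to $2\lambda\cos(\lambda L)+(p-1)\sin(\lambda L)=0$ is correct, and so is your handling of the additional terms.

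The gap is how you resolve the weight mismatch you noticed. You prefer the factor $t^{s-1}$ in the last display of the proof of Theorem~\ref{th4} over the factor $t^{s+1-p}$ in its statement. That display is a misprint, not something the argument delivers. The AM--GM step rests on the pointwise identity and estimate
\begin{multline*}
\frac{p^2}{s^2}\,\frac{y^{p-2}y'^2}{t^{s-1}}=\Bigl(\frac{y^p}{t^{s+1}}\Bigr)^{1-\frac2p}\Bigl(\Bigl(\frac ps\Bigr)^p\frac{y'^p}{t^{s+1-p}}\Bigr)^{\frac2p}\\
\le\Bigl(1-\frac2p\Bigr)\frac{y^p}{t^{s+1}}+\frac2p\Bigl(\frac ps\Bigr)^p\frac{y'^p}{t^{s+1-p}}.
\end{multline*}
Here the paper's choice $a=y^p/t^s$ should read $y^p/t^{s+1}$. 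So the weight actually produced is $t^{s+1-p}$, which for $s=p-1$ is $t^0$, not $t^{p-2}$.

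In fact no argument can give the weight $t^{p-2}$ when $p>2$. Fix $y\not\equiv 0$ on $[a,b]$ and put $y_\mu(t)=y(t/\mu)$ on $[\mu a,\mu b]$. Then
\begin{gather*}
\int_{\mu a}^{\mu b}\frac{|y_\mu(t)|^p}{t^p}\,dt=\mu^{1-p}\int_a^b\frac{|y(t)|^p}{t^p}\,dt,\\
\int_{\mu a}^{\mu b}\frac{|y_\mu'(t)|^p}{t^{p-2}}\,dt=\mu^{3-2p}\int_a^b\frac{|y'(t)|^p}{t^{p-2}}\,dt.
\end{gather*}
The value of $\lambda$, and hence the constant, depends only on $p$ and $b/a$. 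The printed inequality on $[\mu a,\mu b]$ is therefore equivalent to $C\mu^{2-p}\int_a^b|y'|^pt^{2-p}\,dt\ge\int_a^b|y|^pt^{-p}\,dt$. This fails as $\mu\to\infty$.

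What the specialization actually proves is
\[
\left(\frac{p}{p-1}\right)^p\left(1+\frac{2p\lambda^2}{(p-1)^2}\right)^{-1}\int_a^b|y'(t)|^p\,dt\ge\int_a^b\frac{|y(t)|^p}{t^p}\,dt .
\]
This is also the only tenable reading of the corollary. It fits the hypothesis $y'\in L^p[a,b]$ and the weightless right-hand side in the definition of $d_p(a,b)$, and it coincides with the printed version only at $p=2$. You identified the right obstacle but resolved it backwards. You should keep the headline weight $t^{s+1-p}$ of Theorem~\ref{th4} and flag $t^{p-2}$ as a misprint, rather than ``confirming'' the corollary from the erroneous display.
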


Now, if we consider continuously differentiable functions such that $y(a) =y(b)= 0$ and put that 
$$
Y_0(t) = t^{\frac{s}{2}}\sqrt{\log\frac{t}{a}} J_\nu \left(j_\nu\left(\frac{\log \frac{t}{a}}{\log\frac{b}{a}}\right)^{q/2} \right),
$$
then using H\"{o}lder's inequality, we have 
$$
y^p(b)\leq \left(\int\limits_t^b|y'(\tau)|d\tau\right)^p\leq \left(\int\limits_t^b \tau^{\frac{s-p+1}{p-1}}\right)\int\limits_t^b\frac {|y'(\tau)|^p}{t^{s+1-p}}d\tau
$$
$$
\leq \left(\frac{p-1}{s}\right)^{p-1}\left(b^{\frac{s}{p-1}}-t^{\frac{s}{p-1}}\right)^{p-1}\int\limits_t^b\frac {|y'(\tau)|^p}{t^{s+1-p}}d\tau.
$$
Hence $y^p(t)/\left(b^{\frac{s}{p-1}} - t^{\frac{s}{p-1}}\right)^{p-1}\to 0$ as $t\to b-$. Combining these facts with the proof of Theorem \ref{th4} we have the following theorem.

\begin{theorem}
Suppose $p\geq 2$, $q>0$, $\nu\in[0,\frac{1}{q}]$, $a$ and $b$ are any fixed numbers with $0 < a < b < \infty$. Let $y:[a,b]\to \mathbb{R}$ be  an absolutely continuous function such that $y(a) = y(b)= 0$ and $y'/t^{(s+1)/p}\in L^p[a,b]$. Then
       $$
\left(\frac{p}{s}\right)^p\int\limits_a^b\frac{|y'(t)|^{p}}{t^{s+1-p}}dt\geq \int\limits_a^b\left(1+\frac{pq^2j_\nu^2}{2s^2\log^2\frac{b}{a}\log^{2-q}\frac{t}{a}}+\frac{p(1-\nu^2q^2)}{2s^2\log^2\frac{t}{a}}\right) \frac{|y(t)|^p}{t^{s+1}}dt.
$$
\end{theorem}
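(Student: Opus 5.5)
The plan is to run the argument of Theorem~\ref{th4} again, now with the Dirichlet comparison function
$$
Y_0(t) = t^{\frac{s}{2}}\sqrt{\log\frac{t}{a}}\, J_\nu \left(j_\nu\left(\frac{\log \frac{t}{a}}{\log\frac{b}{a}}\right)^{q/2} \right),
$$
which is the choice $\lambda=j_\nu(\log\frac{b}{a})^{-q/2}$. It is positive on $(a,b)$ and vanishes at $b$.

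\textbf{Step 1: reduction.} As in Theorem~\ref{th4}, replace $y$ by $\int_a^t|y'|$, so that $y$ may be taken nonnegative. This must be done carefully, since the replacement no longer vanishes at $b$. I would instead work directly with $|y|$, which is absolutely continuous, satisfies $|y|(b)=0$ and has $||y|'|=|y'|$ a.e.

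\textbf{Step 2: weighted square identity.} Expand
$$
0\le\int_a^b \frac{y^{p-2}}{t^{s-1}}\Bigl(y'-\tfrac{2}{p}\tfrac{Y_0'}{Y_0}y\Bigr)^2dt .
$$
Integrate the cross term by parts, using $y^{p-1}y'=\frac1p(y^p)'$. The ODE (\ref{f_difeq}) holds with $\lambda$ replaced by $j_\nu(\log\frac{b}{a})^{-q/2}$, because it does not depend on the Lamb equation. Substituting it turns the interior term into
$$
-\frac{4}{p^2}\Bigl(\frac{s^2}{4}+\frac{q^2j_\nu^2}{4\log^2\frac{b}{a}\log^{2-q}\frac ta}+\frac{1-\nu^2q^2}{4\log^2\frac ta}\Bigr)\frac{y^p}{t^{s+1}} .
$$
The factor $\frac{4}{p^2}\cdot\frac14=\frac1{p^2}$ matches what appears in the proof of Theorem~\ref{th4}.

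\textbf{Step 3: boundary terms (the main obstacle).} At $t=a$ the term vanishes exactly as in Theorem~\ref{th4}. The H\"older bound gives $y^p=o\bigl((t^{s/(p-1)}-a^{s/(p-1)})^{p-1}\bigr)$, while $Y_0'/Y_0\sim C/(t-a)$, and $p\ge2$ makes the product tend to $0$.

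At $t=b$, $Y_0$ has a simple zero, since $J_\nu'(j_\nu)\neq0$. Hence $Y_0'/Y_0\sim -1/(b-t)$. The H\"older estimate on $[t,b]$, already displayed before the theorem, gives
$$
y^p(t)\le C(b-t)^{p-1}\int_t^b\frac{|y'|^p}{\tau^{s+1-p}}d\tau=o\bigl((b-t)^{p-1}\bigr).
$$
So $y^p\,Y_0'/Y_0=o((b-t)^{p-2})\to0$ for $p\ge2$. Even if this limit were not zero, its sign would be favourable, because the term enters with the correct sign as a nonpositive quantity, as in the Section~4 computation. Either way the boundary term can be dropped.

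\textbf{Step 4: from the mixed integral to $\int|y'|^p$.} We now have
$$
\frac{p^2}{s^2}\int\frac{y^{p-2}y'^2}{t^{s-1}}\ge\int W(t)\frac{y^p}{t^{s+1}},
$$
where $W=1+\frac{q^2j_\nu^2}{s^2\log^2\frac ba\log^{2-q}\frac ta}+\frac{1-\nu^2q^2}{s^2\log^2\frac ta}$. Apply the same Young/AM–GM step as in Theorem~\ref{th4}, with exponents $1-\frac2p$ and $\frac2p$, to
$$
\frac{y^{p-2}y'^2}{t^{s-1}}=\Bigl(\frac{y^p}{t^{s+1}}\Bigr)^{1-2/p}\Bigl(\frac{|y'|^p}{t^{s+1-p}}\Bigr)^{2/p}.
$$
This bounds $\frac{p^2}{s^2}\int\frac{y^{p-2}y'^2}{t^{s-1}}$ by $\frac{p-2}{p}\int\frac{y^p}{t^{s+1}}+\frac{2}{p}\bigl(\frac ps\bigr)^p\int\frac{|y'|^p}{t^{s+1-p}}$.

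Rearranging then gives
$$
\frac2p\Bigl(\frac ps\Bigr)^p\int\frac{|y'|^p}{t^{s+1-p}}\ge\int\Bigl(W-1+\frac2p\Bigr)\frac{y^p}{t^{s+1}}.
$$
Multiplying by $\frac p2$ produces exactly the weight $1+\frac p2(W-1)$ in the statement.

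The one point that needs care is the sign of the $\log^{-2}$ coefficient. Since $\nu\le1/q$, the coefficient $1-\nu^2q^2$ is nonnegative, so every additional term is nonnegative.
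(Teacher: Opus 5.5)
Your route is the paper's: the Dirichlet comparison function $Y_0$ with $\lambda=j_\nu(\log\frac{b}{a})^{-q/2}$, the weighted square, H\"older at both endpoints, and weighted AM--GM. Working with $|y|$ instead of $\int_a^t|y'|$ is a necessary repair, since the latter does not vanish at $b$.

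\textbf{The gap: the substitution in Step 2.} With this $\lambda$ you get $q^2\lambda^2=q^2j_\nu^2/\log^{q}\frac{b}{a}$, not $q^2j_\nu^2/\log^{2}\frac{b}{a}$. So (\ref{f_difeq}) gives the interior weight
$$
\frac{s^2}{4}+\frac{q^2j_\nu^2}{4\log^{q}\frac{b}{a}\,\log^{2-q}\frac{t}{a}}+\frac{1-\nu^2q^2}{4\log^2\frac{t}{a}},
$$
and your argument proves the inequality with $\log^{q}\frac{b}{a}$ in place of $\log^{2}\frac{b}{a}$. Rescaling $\log\frac{t}{a}\mapsto\log\frac{t}{a}/\log\frac{b}{a}$ confirms that $\log^{q}\frac{b}{a}$ is the only consistent power. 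This version coincides with the statement only for $q=2$, and implies it only when $\log^{2-q}\frac{b}{a}\ge1$.

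\textbf{In the remaining regime the statement is false, at least for $p=2$.} For $p=2$ the statement is exactly (\ref{th3_ineq}) multiplied by $4/s^2$. Take $\nu\in(0,\frac1q]$. Then $Y_0$ is admissible and gives equality in the corrected inequality: the square vanishes identically, both boundary terms vanish, and AM--GM is an identity when $p=2$. Hence for $y=Y_0$ the stated right-hand side exceeds the left-hand side by
$$
\frac{q^2j_\nu^2}{s^2}\Bigl(\frac{1}{\log^{2}\frac{b}{a}}-\frac{1}{\log^{q}\frac{b}{a}}\Bigr)\int_a^b\frac{Y_0^2(t)\,dt}{t^{s+1}\log^{2-q}\frac{t}{a}}>0 .
$$
A concrete case is $q=1$, $\nu=\frac12$, $s=1$, $b=\sqrt{e}\,a$. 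So no argument can prove the statement as written for $q\neq 2$; the paper's one-line proof, and Theorem~\ref{th3}, contain the same slip. You should state the $\log^{q}\frac{b}{a}$ version, and for it the rest of your proof goes through.

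\textbf{A smaller point: the sign aside in Step 3 is backwards.} Near $b$ one has $Y_0'/Y_0<0$. The term $\frac{4}{p^2}\lim_{t\to b-}t^{1-s}\frac{Y_0'}{Y_0}y^p$ sits on the lower-bound side, so a nonzero limit would weaken the conclusion; the same holds for the term at $a$. Both limits genuinely have to vanish. Your estimates $o((t-a)^{p-2})$ and $o((b-t)^{p-2})$ do give this, so simply drop the ``either way'' remark.

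It is also cleanest to integrate by parts on $[\alpha,\beta]\subset(a,b)$ and pass to the limit by monotone convergence. Finally, record that $s>0$ is assumed: the statement omits it, but you use it throughout.
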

\section{Inequalities in an annuli}

In this section, using one-dimensional inequalities, we obtain spatial Hardy type inequalities. Inequalities in annuli are established.  Suppose  $n\in \mathbb{N}$, $0<r_1<r_2<\infty$  and $$A_n(r_1,r_2) =\{x\in\mathbb{R}^n: r_1<|x|< r_2\}$$
is  the open multidimensional annulus.

The following theorem holds. 

\begin{theorem}  \label{th6} Suppose $n\in \mathbb{N}$, $0<r_1<r_2<\infty$,  $f\in C_0^\infty(A_n(r_1,r_2))$,  $q>0$, $s>0$ and $\nu\in[0,\frac{1}{q}]$. Then the following sharp inequality holds
\begin{multline}\label{th5_in}
    \frac{s^2}{4}\int\limits_{A_n(r_1,r_2)}\frac{f^2(x)}{|x|^{s+n}}dx+\frac{q^2j_\nu^2}{4\log^2\frac{r_1}{r_2}} \int\limits_{A_n(r_1,r_2)}\frac{f^2(x)}{|x|^{s+n} \log^{2-q}\frac{|x|}{r_1}}dx+\\ \frac{1-\nu^2q^2}{4}\int\limits_{A_n(r_1,r_2)}\frac{f^2(x)}{|x|^{s+n}\log^2\frac{|x|}{r_1}}dx \leq\int\limits_{A_n(r_1,r_2)}\frac{|\nabla f(x)|^2}{|x|^{s+n-2}}dx.
\end{multline}
Moreover, if $\nu\in(0,\frac{1}{q}]$ then the equality in this inequality is attained for 
$$
Y_0(r) = r^{\frac{s}{2}}\sqrt{\log\frac{r}{r_1}} J_\nu \left(j_\nu\left(\frac{\log \frac{t}{r_1}}{\log\frac{r_2}{r_1}}\right)^{q/2} \right).
$$
\end{theorem}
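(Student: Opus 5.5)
We reduce to Theorem \ref{th3} by passing to polar coordinates. Write $x=r\omega$ with $r=|x|\in(r_1,r_2)$ and $\omega\in S^{n-1}$, so that $dx=r^{n-1}\,dr\,d\omega$. For each fixed $\omega$ set $y_\omega(r)=f(r\omega)$. Since $f\in C_0^\infty(A_n(r_1,r_2))$, the function $y_\omega$ is smooth on $[r_1,r_2]$ and satisfies $y_\omega(r_1)=y_\omega(r_2)=0$. In particular $y_\omega'^2/r^{s-1}\in L^1[r_1,r_2]$, so Theorem \ref{th3} applies to $y_\omega$ with $a=r_1$, $b=r_2$.

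All the weights in (\ref{th5_in}) are radial. Each left-hand integral becomes $\int_{S^{n-1}}\int_{r_1}^{r_2}(\cdots)\,r^{n-1}dr\,d\omega$, and the powers of $r$ collapse. For example, $|x|^{-(s+n)}r^{n-1}=r^{-(s+1)}$, which is exactly the weight appearing in (\ref{th3_ineq}); the same happens for the two logarithmic terms. Note that $\log^2\frac{r_1}{r_2}=\log^2\frac{r_2}{r_1}$, so the middle constant agrees with the one in Theorem \ref{th3}.

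On the right-hand side we use the pointwise inequality $|\nabla f(r\omega)|^2\ge |\partial_r f(r\omega)|^2=y_\omega'(r)^2$. Together with $|x|^{-(s+n-2)}r^{n-1}=r^{-(s-1)}$, this gives
$$\int_{A_n(r_1,r_2)}\frac{|\nabla f|^2}{|x|^{s+n-2}}dx\ge\int_{S^{n-1}}\int_{r_1}^{r_2}\frac{y_\omega'^2(r)}{r^{s-1}}\,dr\,d\omega .$$
We now apply (\ref{th3_ineq}) to $y_\omega$ for each $\omega$ and integrate over $S^{n-1}$. This yields (\ref{th5_in}), by Fubini and the nonnegativity of all integrands. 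The coefficient $(1-\nu^2q^2)/4$ is nonnegative because $\nu\le 1/q$, although this sign is not even needed here, since the inequality is used termwise with identical coefficients.

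For the equality and sharpness part, we take the radial function $f(x)=Y_0(|x|)$. In that case $|\nabla f|=|Y_0'(|x|)|$, so the gradient inequality above is an equality. The polar reduction then turns both sides of (\ref{th5_in}) into $|S^{n-1}|$ times the corresponding sides of (\ref{th3_ineq}), and Theorem \ref{th3} gives equality when $\nu\in(0,1/q]$. For $\nu=0$, and for the extremal function in general, we instead use the radial versions of the functions from Proposition \ref{prop3}. These show that the constants cannot be improved.

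The main obstacle is technical. $Y_0$ is not in $C_0^\infty(A_n)$: it vanishes at $r_1$ and $r_2$ but is not compactly supported, and its derivative blows up at $r_1$ when $\nu q<1$. So equality and sharpness must be read in the completed space $W_0^{1,2}$ with the weighted norm. To handle this, we approximate $Y_0$ by cutoffs $\chi_k(|x|)Y_0(|x|)$ and check convergence of every term. This uses the integrability $Y_0'/t^{(s-1)/2}\in L^2$ established for $\nu>0$ in the proof of Theorem \ref{th2}, together with the boundary behaviour coming from $J_\nu(z)\sim (z/2)^\nu/\Gamma(\nu+1)$ near $r_1$ and the simple zero of $J_\nu$ at $j_\nu$ near $r_2$.
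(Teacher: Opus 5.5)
Your proposal is correct and follows the paper's proof essentially verbatim: Theorem \ref{th3} on each ray, integration over $\mathbb{S}^{n-1}$, the bound $|\partial_r f|\le|\nabla f|$, and radial test functions ($Y_0$ and those of Proposition \ref{prop3}) for sharpness. Your remark that $Y_0(|x|)\notin C_0^\infty(A_n(r_1,r_2))$, so that attainment must be read through cutoff approximation in the weighted $W_0^{1,2}$ closure, addresses a point the paper passes over. One caveat comes from the paper, not from you: with $\lambda=j_\nu\log^{-q/2}\frac{r_2}{r_1}$, identity (\ref{f_difeq}) gives the middle constant $\frac{q^2j_\nu^2}{4\log^{q}\frac{r_2}{r_1}}$, which matches the stated $\log^2$ constant only for $q=2$ (or $r_2=er_1$), though your reduction goes through unchanged with the corrected constant.
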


\textbf{\textit{Proof of Theorem \ref{th6}.}} We denote by $\mathbb{S}^{n-1}$ the $(n-1)$-dimensional unit sphere in $\mathbb{R}^n$, by $d^{n-1}\omega$ the usual volume measure on $\mathbb{S}^{n-1}$, by $\nabla f$ the usual gradient operator. Using Theorem \ref{th3}, we obtain 
\begin{multline*}    
    \frac{s^2}{4}\int\limits_{r_1}^{r_2}\frac{f^2(r,\theta)}{r^{s+n}}r^{n-1}dr+\frac{q^2j_\nu^2}{4\log^2\frac{r_1}{r_2}} \int\limits_{r_1}^{r_2}\frac{f^2(r,\theta)}{t^{s+n} \log^{2-q}\frac{r}{r_1}}r^{n-1}dr+\\ \frac{1-\nu^2q^2}{4}\int\limits_{r_1}^{r_2}\frac{f^2(r,\theta)}{r^{s+n}\log^2\frac{r}{r_1}}r^{n-1}dr
    \leq\int\limits_{r_1}^{r_2}\left(\frac{\partial f(r,\theta)}{\partial r}\right)^2\frac{r^{n-2}}{r^{s+n-2}}dr,
\end{multline*}
where  $r\in(0,\infty)$ and $\theta\in \mathbb{S}^{n-1}$.

Integrating both sides with respect to the normalized surface
measure on $\mathbb{S}^{n-1}$, we have
\begin{multline*} 
    \frac{s^2}{4}\int\limits_{A_n(r_1,r_2)}\frac{y^2(x)}{|x|^{s+n}}dx+\frac{q^2j_\nu^2}{4\log^2\frac{r_1}{r_2}} \int\limits_{A_n(r_1,r_2)}\frac{f^2(x)}{|x|^{s+n} \log^{2-q}\frac{|x|}{r_1}}dx+\\ 
    \frac{1-\nu^2q^2}{4}\int\limits_{A_n(r_1,r_2)}\frac{f^2(x)}{|x|^{s+n}\log^2\frac{|x|}{r_1}}dx
    \leq\int\limits_{A_n(r_1,r_2)}\left(\frac{\partial f(x)}{\partial r}\right)^2\frac{dx}{|x|^{s+n-2}}.
\end{multline*}
Taking into account the inequality
$$
\left|\frac{\partial f}{\partial r}\right|^2 \leq |\nabla f|^2 = \left|\frac{\partial f}{\partial r}\right|^2+  \frac{1}{r^2}|\nabla_{\theta}f|^2,
$$
where $x = (r, \theta)\in\mathbb{R}^n$, $r\in(0,\infty)$ and $\theta\in \mathbb{S}^{n-1}$,
we get inequality (\ref{th5_in}). 

To prove that the constants are sharp, we use radial functions of the form $f(r,\theta) = v(r)$, where $v\in C_0^1(r_1,r_2)$. For such functions the constants being sharp in (\ref{th5_in}) is easily seen to be equivalent to the constants in (\ref{th3_ineq}) being sharp over the set of functions $v\in C_0^1(r_1,r_2)$. This is indeed so because the functions which were employed in Theorem \ref{th3} and Proposition \ref{prop3} to prove the sharpness of the constants.

\section*{Acknowledgement}
The research is supported by a grant of Russian Science Foundation (project no. 23-11-00066-П,  https://rscf.ru/en/project/23-11-00066-П)

\section*{Declarations}
The author report there are no competing interests to declare.

\end{document}